\documentclass[twoside,a4paper]{amsart}

\usepackage{mathabx}

\usepackage{pdfsync}
\usepackage{enumitem}
\usepackage{tikz}
\usetikzlibrary{calc}

\usepackage{xcolor}
\definecolor{webgreen}{rgb}{0,.5,0}
\definecolor{webbrown}{rgb}{.6,0,0}
\definecolor{RoyalBlue}{cmyk}{1, 0.50, 0, 0}
\usepackage[colorlinks=true, breaklinks=true, urlcolor=webbrown, linkcolor=RoyalBlue, citecolor=webgreen,backref=page]{hyperref}
\usepackage{bbold}

\usepackage{courier} % tt
\normalfont
\usepackage{epsfig, graphicx, subfigure}
\usepackage{amsmath, amssymb}

\newcommand{\T}     {\mathbb{T}}
\newcommand{\N}  {\mathbb{N}}
\newcommand{\D}     {\mathbb{D}}
\newcommand{\R}     {\mathbb{R}}

\newcommand{\Z}     {\mathbb{Z}}
\newcommand{\dist}{\mathrm{dist}}

\newcommand{\supp}{\mathrm{supp}}

\let\Re=\undefined
\DeclareMathOperator{\Re}{Re}
\let\Im=\undefined
\DeclareMathOperator{\Im}{Im}

\def\ge{\geqslant}
\def\le{\leqslant}

\newtheorem{theorem}{Theorem}[section]

\newtheorem{corollary}[theorem]{Corollary}
\newtheorem{lemma}[theorem]{Lemma}

\newtheorem{definition}[theorem]{Definition}

\newtheorem{remark}[theorem]{Remark}

\numberwithin{equation}{section}

\begin{document}

\title[Wave operators for Jacobi matrices ]{
Wave operators for Jacobi matrices    }

\begin{abstract}
We study the wave operators for a Jacobi matrix $J$ whose canonical spectral measure $\rho$ satisfies the  Szeg\H{o} condition. In particular, for $\gamma_s$ the Verblunsky coefficients of the measure associated to $\rho$ by the Szeg\H o mapping, we prove existence and completeness of wave operators under the condition $\lim_{n\to\infty}\log(n)\sum_{s=n}^{2n}|\gamma_s|^2=0$. In the course of proving our main theorem, we establish an estimate on the norm of orthogonal polynomials on the unit circle localized to arcs that may be of independent interest. 
\end{abstract} \vspace{1cm}

%\thanks{
%The work of SD done in the first two sections
%  was supported by the grant NSF-DMS-1764245 and his research on the rest of the paper was supported by the Russian Science Foundation (project RScF-19-71-30004). The work of LM was supported by the grant RTG NSF-DMS-1147523.}

\author[S.A. Denisov]{Sergey A. Denisov}
\address{Department of Mathematics, University of Wisconsin-Madison, 480 Lincoln Dr., Madison, WI 53706, USA}
\email{\href{mailto:denissov@wisc.edu}{denissov@wisc.edu}}

\thanks{
The research of S.D. was supported by the grant NSF-DMS-2450716, Simons Fellowship in Mathematics, Simons Travel Support for Mathematicians Award, and by the Van Vleck Professorship Research Award. He gratefully acknowledges the hospitality of IHES where part of this work was done.
}

\author[G. Young]{Giorgio Young}
\address{Department of Mathematics, University of Wisconsin-Madison, 480 Lincoln Dr., Madison, WI 53706, USA}
\email{\href{mailto:gfyoung@wisc.edu}{gfyoung@wisc.edu}}

\thanks{G.Y.\ acknowledges the support of the National Science Foundation through grant DMS--2303363.}

\subjclass{}

\keywords{}

\maketitle

\setcounter{tocdepth}{3}
\setcounter{tocdepth}{2}

\tableofcontents

%\section*{Contents}
%\begin{enumerate}
%\item Introduction
%\item Preliminary OPUC Results
%\item Existence and Completeness of Wave Operators
%\item Appendix
%\begin{enumerate}[label=\arabic{enumi}.\arabic*.]
%\item Auxiliary Proofs and an OPUC Estimate
%\item The Weak Nonlinear Carleson Condition
%\end{enumerate}
%\end{enumerate}

\section{Introduction}

We consider semi-infinite Jacobi matrices 
\begin{align*}
J = \begin{pmatrix}
v_1& b_1&0&&&&\\
b_1& v_2&b_2&0&&&\\
0&b_2&v_3&b_3&0&&\\
&\ddots&\ddots&\ddots&\ddots&\ddots\\
\end{pmatrix}
\end{align*}
in the bounded and self-adjoint setting, where the Jacobi coefficients $\{ b_n\}$ and $\{v_n\}$ satisfy $b_n>0, v_n\in \R$ for all $n\in \N$, and $\{b_n\}, \{v_n\}\in \ell^\infty(\N)$.
Under these conditions, $J$ defines a bounded self-adjoint operator $J:\ell^2(\N)\to\ell^2(\N)$. Through the special case $b_n=1/2$ for all $n\in \N$, one recovers discrete Schr\"odinger operators, which serve as Hamiltonians for lattice models in quantum mechanics. Jacobi matrices arise as a natural generalization of these operators, particularly from the perspective of inverse spectral theory discussed below. Our main interest in this work is the dynamics generated by $J$ through the discrete time-dependent Schr\"odinger equation:
\begin{align*}
-i\partial_t\psi=J\psi,\quad \psi(0)=f\in \ell^2(\N), 
\end{align*}
whose solution may be written in terms of the one-parameter unitary group as $\psi(t)=e^{itJ}f$.

By Favard's theorem of classical analysis, there is a bijective correspondence between bounded semi-infinite Jacobi matrices $J$ and compactly and infinitely supported (in the sense of cardinality) probability measures on $\R$. In the forward direction of this correspondence, one maps $J$ to its canonical spectral measure, $\rho$. In this work, we consider Jacobi matrices whose corresponding measures $\rho$ are supported on $[-1,1]$, and are in the Szeg\H o class: given the decomposition of $\rho$ with respect to the equilibrium measure $\omega$ for the interval $[-1,1]$,
\begin{align*}
d\rho=\rho' d\omega+\rho_s,\,\,d\omega(x)=\pi^{-1}(1-x^2)^{-\frac 12}\mathbb{1}_{[-1,1]}(x)dx,
\end{align*}
we assume that $\rho$ satisfies the Szeg\H o condition:
\begin{align}\label{eq:Szego}
\int\limits_\R \log \rho'd\omega>-\infty\,.
\end{align}

Despite the inverse direction of this correspondence passing through the orthogonal polynomials on the real line (OPRL), much of our work here makes use of orthogonal polynomials on the unit circle (OPUC). In particular, exploiting a well-known relationship between OPRL with respect to measures supported on the interval $[-1,1]$, and OPUC with respect to a related measure, we may work on the circle and apply harmonic analysis techniques. Thus, our first result is most naturally stated in the OPUC terminology, and we introduce it now. We refer the reader to \cite{SimonOPUC1,SimonOPUC2,simon2010szegHo,Szego} for further background on the brief description that follows.

Let $\sigma$ be an infinitely supported probability measure on the unit circle $\T$. Denote the orthonormal polynomials by $\phi_n(z)$ and the monic orthogonal polynomials by $\Phi_n(z)$.
Clearly, $\phi_n=\Phi_n/\|\Phi_n\|_{2,\sigma}$. We will use notation  $f^*(z)=z^n\overline{f(\bar z^{-1})}$ for a polynomial $f$ of degree $n$. The polynomials $\{\Phi_n\}$ satisfy recurrence \cite[Theorem 1.5.2, p.~57]{SimonOPUC1}
\begin{equation}\label{opuc1}
\left\{\begin{array}{cc}
\Phi_{n+1}(z)=z\Phi_n(z)-\bar\gamma_n \Phi_n^*(z), &\Phi_0(z)=1,\\
\Phi_{n+1}^*(z)=\Phi_n^*(z)-z\gamma_n\Phi_n(z),&\Phi_0^*(z)=1\,,
\end{array}\right.
\end{equation}
where  $\gamma_n$ are the Verblunsky coefficients corresponding to the measure $\sigma$. The map $\sigma\mapsto \{\gamma_n\}|_{n=0}^\infty\in \D^\infty$ is a bijection, cf. \cite[p.~2]{SimonOPUC1}.

Measures $\rho$ supported on $[-1,1]$ may be related to even measures $\sigma$ on $\T$ through the Szeg\H o mapping $Sz$.  We will write $\sigma=Sz^{-1}(\rho)$ for the measure $\sigma$ on $\T$ that satisfies $d\sigma(z)=d\sigma(1/z)$ and
\begin{align}\label{opuc-oprl}
\int\limits_{\T}f(z)d\sigma(z)=\int\limits_{[-1,1]}f(e^{i\arccos(x)})d\rho(x)
\end{align}
for any $f\in C(\T)$ with $f(z)=f(1/z), z\in \T$ \cite[p.~31, eq.~(1.9.5)]{simon2010szegHo}. Throughout the text, $\sigma$ will refer to the measure related to $\rho$ in this way. For such measures, all $\gamma_n$ are real, and  they may be related to the Jacobi coefficients by means of the direct Geronimus relations \cite[p.~33]{simon2010szegHo}
\begin{align}\label{geronimus}
\begin{split}
b_{k+1}&=\frac 12 \left( (1-\gamma_{2k-1})(1-\gamma_{2k}^2)(1+\gamma_{2k+1})    \right)^{\frac 12},\\
v_{k+1}&=-\frac 12 (\gamma_{2k-2}(1+\gamma_{2k-1})-\gamma_{2k}(1-\gamma_{2k-1}))\,,
\end{split}
\end{align}
with the convention $\gamma_{-1}:= -1$. If we denote the polynomials orthonormal with respect to $\rho$ by $p_n(x)$, then \cite[p.~32, eq.~(1.9.12)]{simon2010szegHo}
\begin{align}\label{pnchin0}
p_n((z+1/z)/2)=(2(1-{\gamma}_{2n-1}))^{-1/2}z^{-n}(\phi_{2n}(z)+\phi_{2n}^*(z)), \, z\in \T\,.
\end{align}

Szeg\H o's theorem, cf. \cite[Theorem 1.8.6]{simon2010szegHo}, is a cornerstone of the theory of OPUC; for the decomposition of $\sigma$ with respect to the normalized Lebesgue measure $dm(z)$ on $\T$, $d\sigma(z)=\sigma'(z)dm(z)+\sigma_s(z)$, it states
\begin{align*}
\prod_{n=0}^\infty (1-|\gamma_n|^2)=\exp\left(\int\limits_{\T}\log(\sigma'(z))dm(z) \right)\,,
\end{align*}
which in turn implies
\begin{align*}
\{ \gamma_n\}\in \ell^2(\Z^+) \iff \int\limits_{\T} \log(\sigma'(z))dm(z)>-\infty.
\end{align*}
With this in hand, the identity \eqref{opuc-oprl} implies that the Szeg\H o condition \eqref{eq:Szego} on $J$ is equivalent to the corresponding Verblunsky parameters satisfying $\{\gamma_n\}\in \ell^2(\Z^+)$. Denoting by $J_0$ the free operator with Jacobi coefficients $b_n=\frac 12, v_n=0$ for all $n$, we may combine this observation with the Geronimus relations to find that the condition \eqref{eq:Szego} implies that $J-J_0$ is a Hilbert-Schmidt operator. Thus, in addition to being a qualitative condition on the size and distribution of the absolutely continuous part of the measure $\rho$, this condition allows for $J$ to be viewed as a perturbation of the free operator. 

Both of these perspectives offer a first motivation for the  study of the evolution generated by $J$, particularly in comparison to the free evolution $J_0$. Motivated by classical results of \cite{AmreinGeorgescu,Ruelle} and already given the condition \eqref{eq:Szego}, one may expect the typical state to spread out over $\N$, behaving as in the free evolution in a qualitative sense, meanwhile the perturbative perspective suggests a stronger and more direct comparison may hold between the two evolutions. Such a comparison is often encoded through the existence and completeness of wave operators $\Omega_{\pm}$, a by now well-studied and classical topic. We refer the reader to the foundational scattering theory papers \cite{Agmon1975,BirmanKrein1962,Hormander1976}, as well as the textbook treatments of the subject found in \cite{Kato1995,LaxPhillips1989,ReedSimon3,Yafaev1992}.

When they exist, wave operators are defined as the strong $\ell^2(\N)$-limits
\begin{align}\label{eq:WO}
\lim_{T\to\pm \infty} e^{-iTJ}e^{iTJ_0}=:\Omega_{\pm}.
\end{align}
We denote by $\ell^2_{\rm ac}(\N)$ the absolutely continuous subspace for $J$, or set of vectors whose spectral measure is absolutely continuous, cf. \cite{LukicSpectralTheory,reed1972methods}:
\begin{align*}
\ell^2_{\rm ac}(\N):=\{\psi\in\ell^2(\N):\mu_{\psi} \text{ is a.c.} \},
\end{align*}
where $\mu_\psi$ is spectral measure of $\psi$ relative to $J$.

When $\Omega_{\pm}$ exist, the inclusion $\text{Ran}(\Omega_\pm)\subseteq \ell^2_{\rm ac}(\N)$ always holds. Completeness is defined as the equality $\text{Ran}(\Omega_\pm)=\ell^2_{\rm ac}(\N)$, so that $\Omega_\pm$ are unitary maps from $\ell^2(\N)$ onto $\ell^2_{\rm ac}(\N)$. This equality yields the comparison between the two evolutions alluded to above; indeed, once this is established, one has that every state in $\ell^2_{\rm ac}(\N)$ behaves asymptotically freely. Establishing these results under \eqref{eq:Szego}, along with a mild quantitative condition on the tail of the series for $\{ \gamma_n\}$, will be the main goal of our work here. For the reader more familiar with the OPUC literature, we note that this is a significantly more delicate task than existence and completeness of the CMV wave operators, where the free dynamics are given by translation, cf. \cite[Theorem 10.7.9]{SimonOPUC2}.\smallskip

Working under the following additional assumption on the $\gamma_n$ related to $J$ as in the above, 
\begin{equation}\label{loga}
\lim_{n\to\infty} \log n\sum_{s=n}^{2n}|\gamma_s|^2=0,
\end{equation}
we prove:

\begin{theorem}\label{thm:main} Given $\{\gamma_n\}\in \ell^2(\Z^+)$ and \eqref{loga}, the strong limits \eqref{eq:WO} exist.  Moreover, 
\begin{align*}
{\rm Ran}(\Omega_\pm )=\ell^2_{\rm ac}(\N).
\end{align*}
\end{theorem}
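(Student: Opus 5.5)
We prove existence and completeness through a spectral representation of $J$ obtained from the OPUC structure, rather than through a trace-class perturbation argument, which is not available since $J-J_0$ is only Hilbert--Schmidt. Identify $\ell^2(\N)$ with $L^2(\rho)$ via $e_n\mapsto p_{n-1}$, and write $L^2(\rho)=L^2(\rho' d\omega)\oplus L^2(\rho_s)$. On the circle side, use \eqref{pnchin0} together with the Szeg\H o asymptotics. Since $\{\gamma_n\}\in\ell^2$, we have $\phi_n^*\to D^{-1}$ in $L^2(\sigma' dm)$, where $D$ is the Szeg\H o function of $\sigma$, and $\phi_{2n}$ behaves like $z^{2n}\overline{D^{-1}}$ in the same norm. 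Hence, for $x=\cos\theta$ on the a.c. part,
\[
p_n(x)\approx c\bigl(z^{n}\overline{D(z)}^{-1}+z^{-n}D(z)^{-1}\bigr),
\]
which is the Jost-function asymptotic form. We then define the candidate wave operator explicitly. The free operator $J_0$ is diagonalized by the Chebyshev polynomials of the second kind, $\sin((n+1)\theta)/\sin\theta$. Set $\Omega_\pm$ to be the map sending a free generalized eigenfunction to the corresponding one for $J$, multiplied by the phase $\overline{D}/D$ (or its conjugate, depending on $\pm$), normalized by $|D|$. By construction this map is an isometry from $\ell^2(\N)$ onto $L^2(\rho'd\omega)$, i.e.\ onto $\ell^2_{\rm ac}(\N)$. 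Completeness will therefore follow automatically once we show that this map is the strong limit in \eqref{eq:WO}.

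To prove the strong limit, it suffices by density to take $f$ with smooth, compactly supported spectral representation $\hat f(\theta)$ in $(0,\pi)$. We show that $\|e^{iTJ_0}f - e^{iTJ}\Omega_\pm f\|\to0$ as $T\to\pm\infty$. Writing $e^{iTJ}\Omega_\pm f$ in coordinates gives
\[
\int e^{iT\cos\theta}\hat f(\theta)\, p_n(\cos\theta)\,\cdots\,d\theta .
\]
Stationary phase places the mass near $n\approx |T|\sin\theta$. The incoming piece $z^{-n}D^{-1}$ (respectively the outgoing piece) cancels against the phase factor and reproduces the free evolution. The error term is controlled by
\[
\sum_{n\sim |T|}\Bigl|\int e^{iT\cos\theta}\hat f(\theta)\bigl(p_n-p_n^{\rm asym}\bigr)\,d\theta\Bigr|^2 .
\]
The key estimate is therefore that $\phi_n^*-D^{-1}$ and $\phi_n - z^n\overline{D^{-1}}$, localized to arcs where $\hat f$ is supported, are small in $L^2$ uniformly over the window $n\in[N,2N]$ as $N\to\infty$. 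This is where the arc-localized OPUC norm estimate mentioned in the abstract enters.

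The main obstacle is this uniform-in-$n$ control over a dyadic window. Pointwise-in-$n$ $L^2$ convergence is not enough, because the wave packet at time $T$ spreads over about $T$ sites simultaneously. We would try the following. Split the Verblunsky sequence into a head $\{\gamma_s\}_{s<N}$ and a tail $\{\gamma_s\}_{s\ge N}$. For the tail, expand the recursion \eqref{opuc1} as a nonlinear Fourier series and estimate the maximal function
\[
\sup_{N\le n\le 2N}\bigl|\phi_n^*/\phi_N^*-1\bigr|
\]
in $L^2$ of an arc by a Menshov--Rademacher type bound. That bound costs a factor $\log N$ times $\sum_{s=N}^{2N}|\gamma_s|^2$, which tends to zero precisely by \eqref{loga}. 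The weak nonlinear Carleson condition mentioned in the appendix would be the tool for making this variational bound rigorous. The head is handled by the standard Szeg\H o convergence $\phi_N^*\to D^{-1}$ on arcs where $\sigma'$ is bounded below; for general arcs, approximate $\hat f$ accordingly. Summing the dyadic pieces then gives the vanishing error, which establishes existence. Completeness follows from the surjectivity of the explicit map onto $L^2(\rho'd\omega)$.
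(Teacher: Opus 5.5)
Your architecture agrees with the paper's. It uses wave packets at scale $\kappa\approx\sqrt T$, with stationary phase placing the packet from the arc at $z_j$ near $n_j=-T\sin x_j$. The limit is explicit and built from the Szeg\H o function (the paper's multiplier $m(z)$, with $\Pi=D^{-1}\mathbb{1}_{E_{{\rm ac},\sigma}}$), and completeness comes from surjectivity of that multiplier.

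The gap is in the only step that uses \eqref{loga}. The Menshov--Rademacher inequality reads
\[
\Bigl\|\max_{N\le n\le 2N}\Bigl|\sum_{s=N}^{n}c_s\phi_s\Bigr|\Bigr\|_{2,\sigma}^2\lesssim \log^2 N\sum_{s=N}^{2N}|c_s|^2 .
\]
So it costs $\log^2 N$, not $\log N$, on the squared norm, and Menshov's examples show this is sharp for general orthonormal systems. Sampling only at the $\sim\kappa=\sqrt N$ indices $n_j$ still costs $\log^2\kappa$. Your maximal-function route therefore needs $\log^2 N\sum_{s=N}^{2N}|\gamma_s|^2\to0$. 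That is the two-logarithm regime of the known criterion $\gamma_n\log(n+1)\in\ell^2$ quoted in the appendix, and it is strictly stronger than \eqref{loga}. For example, $|\gamma_s|^2=c\bigl((s+3)\log^{3/2}(s+3)\bigr)^{-1}$ gives an $\ell^2$ sequence satisfying \eqref{loga}, yet $\log^2 N\sum_{s=N}^{2N}|\gamma_s|^2\to\infty$. A one-logarithm maximal bound exploiting the nonlinear structure of \eqref{opuc1} is not a known result; the paper notes that even a.e.\ convergence of $\phi_n^*$ is open for general Szeg\H o measures. Invoking a ``weak nonlinear Carleson condition'' does not supply such a bound.

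The missing idea is that no maximal function is needed. Theorem~\ref{lem:PCA} only controls the linearized quantity $\sum_j\int|\phi^*_{n_j}-\Pi|^2\omega_j^2\,d\sigma$, in which the $j$-th index is tied monotonically to the $j$-th arc.
\begin{itemize}
\item The paper first restricts to every other arc and truncates the Verblunsky coefficients to $[\kappa^2/2,3\kappa^2]$. This is legitimate because the relevant inner products depend only on those coefficients.
\item It then uses \eqref{pol} from Corollary~\ref{cor1}, which gives $\sum_j\|\widetilde\Phi^*_{n_j}\omega_j\|^2_{2,\widetilde\sigma}\approx\sum_j\|\omega_j\|^2_{2,m}$. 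That corollary rests on freezing $s_{n_j+u}$ to $s_{n_j}z^u$ inside each packet. The same freezing is what turns your coordinate-space error sum into an arc-localized $L^2_\sigma$ estimate, and your sketch does not address it.
\item Expanding $\widetilde\Phi^*_{n_j}=1-z\sum_s\widetilde\gamma_s\widetilde\Phi_s$ converts the quadratic error $I_3$ into minus the linear cross term $I_2$.
\item $I_2$ is bounded by summation by parts in $j$ against $e^{-4\pi iju/\kappa}$, whose partial sums are $\lesssim\kappa/p$. This is combined with $\sum_s|\Gamma(s,u)|^2\lesssim\|\widetilde\gamma\|_2^2$, which comes from the Szeg\H o distance formula.
\item The harmonic sum $\sum_{p\le\kappa}1/p$ then yields $|I_2|\lesssim\log\kappa\sum_{s=\kappa^2/2}^{3\kappa^2}|\gamma_s|^2$. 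That is exactly one logarithm, matching \eqref{loga}.
\end{itemize}
Without this argument, or a genuinely new one-logarithm maximal inequality, your proof establishes the theorem only under the stronger decay hypothesis.
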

For convenience, we will work only with $\Omega_+$, with analogous proofs establishing the result for $\Omega_-$. Motivation for the condition \eqref{loga} may be found in our discussion of the pointwise convergence assumption in Subsection~\ref{app:PCA}. \medskip

The orthonormal polynomials $p_n\in L^2_\rho(\R)$ form the basis for the eigenfunction expansion for $J$, with respect to which the evolution under $J$ is a multiplication operator. 
In proving Theorem~\ref{thm:main}, we make use of the relationship \eqref{pnchin0} between the polynomials orthogonal with respect $\rho$, and those orthogonal with respect to $\sigma$. We can rewrite \eqref{pnchin0} as
\begin{align}\label{pnchin}
p_n(1/2(z+1/z))=(2(1-{\gamma}_{2n-1}))^{-1/2}(s_n(z)+\overline{s_n}(z))=:c_n(s_n(z)+\overline{s_n}(z)), \, z\in \T\,,
\end{align}
where $s_n:=z^{-n}\phi_{2n}$. With this in hand, and making use of the symmetry $\overline{s_n}(z)=s_n(1/z)$, it will be sufficient to control the $J$-evolution of the free wave packet $e^{iTJ_0}f$ expanded in terms of the $s_n(z)$ in the space $L^2_\sigma(\T)$. In this space, the limit \eqref{eq:WO} may be computed explicitly in terms of the Szeg\H o function $D(z)$, as defined in \eqref{eq:Szegofxn} below, and completeness follows from its form.

As further context for the above theorem, we note that in the case when $J-J_0$ is trace class, the Kato-Rosenblum theorem guarantees existence and completeness of $\Omega_\pm$ \cite[Theorem XI.8]{ReedSimon3}. More recent work in the continuum has shown existence and completeness in a suitable sense for modified wave operators in the intermediate regime of potentials in $L^p(\R^+)$ for $1\leq p<2$ \cite{christ2002scattering}, building on results of the same authors in the stationary scattering setting in \cite{ChristKiselev2001}. We refer the reader to \cite{KimKiselev2009} for analogous stationary scattering results in the discrete setting. Meanwhile, the works \cite{kiselev1998modified,Pearson1978} provide discrete and continuum examples of potentials with slower decay producing purely singular continuous spectrum. This implies that $\ell^2_{\rm ac}(\N)=\{ 0\}$, and so precludes the existence of wave operators. From this perspective, and with the observation above on the Hilbert-Schmidt decay of $J-J_0$, one may view our work here as taking place in the transitional, and optimal, regime. \smallskip

For more closely related and motivating work, we turn to previous results in the setting of Dirac operators. There, in analogy to the OPUC/CMV setting mentioned above, the free dynamics are again translation. We refer the reader to \cite{Denisov2004,LM}, establishing existence of wave operators throughout the transitional regime of $L^2(\R^+)$ potentials, as well as \cite{Bessonov2020}, proving the existence and completeness of modified wave operators for more general potentials under the Szeg\H o condition. In particular, this latter work covers the range of potentials in $L^p(\R^+)$ for $1\leq p\leq 2$. Finally, in the work \cite{BessonovDenisov2023}, the previous two authors completely characterized the close connection between the Szeg\H o condition and time-dependent scattering for Dirac operators hinted at in their prior work. In particular, they prove that for $L^1_{\rm loc}(\R^+)$ potentials, the spectral measure is in the Szeg\H o class if and only if the wave operators exist. Moreover, the Szeg\H o condition implies completeness.\smallskip

Using the direct Geronimus relations \eqref{geronimus}, we may also prove a corollary establishing existence and completeness of the wave operators for $J$ under hypotheses that only depend on the coefficients of $J$. We define 
\begin{align*}
\delta\ell_{\log}^2:=\{\{a_{n+1}-a_n\}: \{a_n\}\in \ell^2(\N),\; a_n\in \R, \,\text{and}\;\{a_n\}\;\text{satisfies}\;\eqref{loga}\}.
\end{align*}

We have the following Corollary of Theorem~\ref{thm:main}.
\begin{corollary}\label{cor:main}
Let $J$ be such that 
\begin{align}\label{kl}
\{v_n\}\in \delta\ell_{\log}^2+\ell^1(\N)\;\text{ and}\,\; \{b_n\}=1/2+\delta\ell_{\log}^2+\ell^1(\N),
\end{align}
then the strong limits \eqref{eq:WO} exist, and ${\rm Ran}(\Omega_{\pm})=\ell^2_{\rm ac}(\N)$.
\end{corollary}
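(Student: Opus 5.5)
The plan is to reduce Corollary~\ref{cor:main} to Theorem~\ref{thm:main} combined with a trace class perturbation argument. The key claim is the following: given $J$ satisfying \eqref{kl}, there exists a Jacobi matrix $\tilde J$, with spectral measure supported on $[-1,1]$, whose Verblunsky parameters $\tilde\gamma_n$ (obtained via $Sz^{-1}$) lie in $\ell^2$ and satisfy \eqref{loga}, and for which $J-\tilde J$ is trace class. Granting this, the Kato--Rosenblum theorem \cite[Theorem XI.8]{ReedSimon3} gives existence and completeness of $\lim e^{-iTJ}e^{iT\tilde J}P_{\rm ac}(\tilde J)$. Theorem~\ref{thm:main} gives the wave operators for the pair $(\tilde J,J_0)$ with range $\ell^2_{\rm ac}(\tilde J)$. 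The chain rule for wave operators then yields existence of $\Omega_\pm(J,J_0)$ with ${\rm Ran}\,\Omega_\pm=\ell^2_{\rm ac}(J)$.

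To construct $\tilde J$, write $v_n=a_{n+1}-a_n+\ell^1$ and $b_n-1/2=c_{n+1}-c_n+\ell^1$, where $a,c\in\ell^2$ are real and satisfy \eqref{loga}. Next, linearize the Geronimus relations \eqref{geronimus}. Up to quadratic terms, which are $\ell^1$ since $\gamma\in\ell^2$, we have
\[
v_{k+1}\approx-\tfrac12(\gamma_{2k-2}-\gamma_{2k}),\qquad b_{k+1}-\tfrac12\approx\tfrac14(\gamma_{2k+1}-\gamma_{2k-1}).
\]
This suggests defining the sequence $\tilde\gamma$ by
\[
\tilde\gamma_{2k}=2a_{k+1}+\text{const},\qquad \tilde\gamma_{2k+1}=4c_{k+2}+\text{const},
\]
with indices adjusted so that the telescoping matches. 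Truncating finitely many initial terms, or scaling them, ensures $|\tilde\gamma_n|<1$. The sequence $\tilde\gamma$ is in $\ell^2$ and inherits \eqref{loga}, since the index doubling only changes $\log n$ by a constant factor and the window $[n,2n]$ is mapped into a bounded union of windows. Define $\tilde J$ through \eqref{geronimus} applied to $\tilde\gamma$. Since all parameters $\tilde\gamma_n$ are real and lie in $(-1,1)$, this corresponds to an even measure on $\T$, so the resulting $\tilde\rho$ is supported on $[-1,1]$. Then $\tilde v_n-v_n$ and $\tilde b_n-b_n$ are sums of $\ell^1$ terms and quadratic expressions in $\ell^2$ sequences, hence lie in $\ell^1$. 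A Jacobi matrix with $\ell^1$ diagonal and off-diagonal entries is trace class, so $J-\tilde J$ is trace class.

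The main obstacle I expect is the bookkeeping in this inversion. One must check that the constants of summation vanish, which is forced since $v_n\to0$ and $b_n\to1/2$ require $\tilde\gamma\to0$. One must also check that the boundary convention $\gamma_{-1}=-1$ and the finitely many modified initial terms only produce finite rank changes. Finally, the quadratic remainder terms, such as $\gamma_{2k-2}\gamma_{2k-1}$ and the error in the square-root expansion of $b_{k+1}$, must be genuinely in $\ell^1$. This holds by Cauchy--Schwarz.
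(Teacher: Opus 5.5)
Your proposal is correct and follows essentially the same route as the paper: linearize the direct Geronimus relations \eqref{geronimus}, invert them to build real $\gamma_n$ from the given $\ell^2$ sequences (the paper takes $\gamma_{2n-2}=\alpha_n$, $\gamma_{2n-1}=2\beta_n$), note that the resulting Jacobi matrix differs from $J$ by an $\ell^1$, hence trace class, perturbation, and combine Kato--Rosenblum with Theorem~\ref{thm:main}. The only differences are presentational: you handle the finitely many initial coefficients by direct modification and spell out the chain rule and the inheritance of \eqref{loga}, while the paper compresses these steps into the remark that the perturbations may be assumed as small as needed.
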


%Our final result is a proof of the existence of wave operators in Ces\` aro time average for all Jacobi operators in the Szeg\H o class.
%\begin{theorem}\label{thm:Cesaro}
%If $\{ \gamma_n\}\in \ell^2(\Z^+)$, then the strong limits  
%\[
%\lim_{T\to +\infty }T^{-1}\sum_{t=0}^{T-1} e^{\mp  itJ}e^{\pm itJ_0}
%\]
%exist.
%\end{theorem}
 
This work is organized as follows: in Section~\ref{section2}, we prove some asymptotic statements on weighted sums of $s_n$ along certain  sequences. These are abstractions of sums that arise naturally after the decomposition into wavepackets that takes place in the proof of Theorem~\ref{thm:main}, and are also necessary for the main result of the section, the convergence result Theorem~\ref{lem:PCA}. This theorem is a key input required to prove Theorem~\ref{thm:main}. Section~\ref{mainsection} contains the proof of our main results: Theorem~\ref{thm:main}, as well as Corollary~\ref{cor:main}. Appendix~\ref{app:1} contains some technical proofs of statements in Section~\ref{section2}, as well as an estimate on the norm of the orthonormal polynomials $\phi_n$ localized to arcs, a key ingredient in these proofs. Finally, in Appendix~\ref{app:PCA}, we discuss the pointwise convergence assumption (PCA), a pointwise strengthening of the $L^2$ convergence that holds for measures in the Szeg\H o class. In that section, we prove that the PCA implies the convergence result of Theorem~\ref{lem:PCA}, and hence Theorem~\ref{thm:main}. Since \eqref{loga} is a weakening of a known sufficient condition for the PCA, this serves as further motivation for working under the assumption \eqref{loga}.\medskip

\textbf{Notation}
\begin{itemize}
%\item $J$ is a Jacobi matrix with Jacobi parameters $v_n$ on the diagonal and $b_n$ on the off-diagonal, and $\rho $ is its corresponding measure supported on $[-1,1]$.
%\item $J_0$ is the free Jacobi matrix with parameters $v_n=0$, $b_n=1/2$ corresponding to the measure $d\rho_0(x)=(1-x^2)d\omega(x)$ for $\omega$ is the equilibrium measure on $[-1,1]$.
\item $\Z^+=\{0,1,2,\dots\}$, $\N=\{1,2,3,\dots\}$, and $\R^+=[0,\infty)$.
\item $\rho$ is the canonical spectral measure for the Jacobi matrix $J$, and $\sigma$ is the even measure on $\T$ related to $\rho$ by the Szeg\H o mapping: $\rho=Sz(\sigma)$.
\item $\phi_n, \Phi_n$ for $n\in \Z^+$ are the orthonormal and monic orthogonal polynomials respectively with respect to $\sigma$, with $\{\gamma_n\}$ the Verblunsky parameters.
\item For a degree $n$ polynomial on the circle $f$, $f^*=z^n\overline{f}$. We define $s_n:= z^{-n}\phi_{2n}=z^{n}\overline{\phi_{2n}^*}$.
\item $p_n$, $n\in\Z^+$ are the orthonormal polynomials with respect to $\rho$.
\item $m$ is the normalized arclength measure on the torus $\T$.
\item We will denote the $\kappa$-th roots of unity by $z_j:=\exp(2\pi ij/\kappa)$, for $j\in\{0,\dots,\kappa-1\}$. We refer to the argument as $x_j:=2\pi j/\kappa$. 
\item $\langle \cdot,\cdot\rangle_{\tau}$ is the inner product of two quantities in $L^2_{\tau}(\cdot)$, and $\perp_{\tau}$ denotes orthogonality in $L^2_\tau(\cdot)$. 
\item $\|\cdot\|_{2,\tau}$ is the $L^2_\tau$ norm, and $\|\cdot\|_\infty$ is the $L^\infty$ norm, where the measure will be clear from context.
\item For a measure $\tau$ on $\T$ satisfying the Szeg\H o condition,
\[
\int\limits_\T \log(\tau'(z))dm(z)>-\infty,\,d\tau(z)=\tau'(z)dm(z)+\tau_s(z)
\]
the Szeg\H o function $D$ in the Hardy space $H^2(\D)$ is  \cite[p.~144, eq.~(2.4.2)]{SimonOPUC1}
\begin{align}\label{eq:Szegofxn}
D(z):=\exp\left(\frac{1}{2}\int\limits_\T\frac{w+z}{w-z}\log(\tau'(w))dm(w) \right)\,.
\end{align}
$D$ has nontangential boundary values $m$-almost everywhere so that we may write $D(z)$ when $|z|=1$ as well.
\item For a sequence $f=\{f_n\}\in \ell^2(\Z)$, $\widehat{f}\in L^2_{m}(\T)$ will denote the Fourier transform of $f$:
\[
\widehat{f}(z)=\sum_{n\in\Z}f_nz^n.
\]
For $g\in L^2_m(\T)$, its inverse is given by the sequence $\{g_n\}$ 
\[
g_n=\langle g,z^n\rangle_{m}=\int\limits_{\T}g(z)z^{-n}dm(z).
\]
\item For a function $f\in L^2(\R)$, the Fourier transform is denoted
by
\[
(\mathcal{F}f)(\xi)=(2\pi)^{-1}\int\limits_\R f(x)e^{ -i x\xi}dx
\]
and the inverse Fourier transform is $\mathcal{F}^{-1}$.

\item $\mathbb{1}_E$ is the indicator function for a set $E$, and $|E|$ denotes its Lebesgue measure. 
\item For a sequence $\{a_j\}$ and a number $P$, $a_j^{(d)}=a_j\mathbb{1}_{j\equiv d\mod P}$ for $0\leq d\leq P-1$. 
\item ${\rm Proj}_Sv$  is the orthogonal projection of vector $v$ onto the subspace $S$ of a Hilbert space, and ${\rm Perp}_Sv=v-{\rm Proj}_Sv$ denotes its perpendicular component.
\item For quantities $A$ and $B$ and $r$ a parameter, $A\le_{r}B$ means that there is a nonnegative constant $C_r$ with $A\leq C_r B$. $A\lesssim B$ means that $A\leq CB$ for a constant $C>0$ independent of the relevant variables, so that $A\lesssim 1$ is shorthand for $A$ being uniformly bounded in the relevant parameters. 
\item $A\sim B$ means $B\lesssim A\lesssim B$, and for a parameter $r$, $A\sim_r B$ means $B\le_{r} A\le_{r}B$.
\item For a natural number $\ell$, the symbol $O_\ell(g)$ stands for a function $f$ that satisfies $|f|\le C_\ell g$ with some $\ell$-dependent constant $C_\ell$ and a nonnegative quantity $g$.
\item $\Omega_{\pm}$ are the strong limits \eqref{eq:WO}.
\end{itemize}\bigskip\bigskip

\section{Preliminary OPUC results}\label{section2}

In this section, $\kappa$ will be a large integer and we assume that the sequence of Szeg\H{o} recursion parameters satisfies $\|\{\gamma_n\}\|_2\le \frac 12$.  Recall that $s_j=\phi_{2j}z^{-j}=\overline{\phi_{2j}^*}z^j, z\in \T$ and $\{s_j\}$ is an orthonormal  system in $L^2_\sigma(\T)$. We also have 
\begin{equation}\label{perp1}
s_j\perp_{\sigma} \{z^{j-1},\ldots,z^{-(j-1)}\}.
\end{equation}

\begin{lemma}\label{lem1}

Suppose $|f_{j,\kappa}|\lesssim 1, |\alpha_{j,u}|\lesssim \kappa^{-1}$ for all $0\le j\le \kappa-1, |u|\le\kappa-1$. Consider
\begin{equation}\label{sum0}
A:=\sum_{j=0}^{\kappa-1}\sum_{ |u|\le \kappa-1}f_{j,\kappa}s_{\kappa^2+2j\kappa+u}\alpha_{j,u}
\end{equation}
and 
\begin{align}\label{bdef}
B=\sum_{j=0}^{\kappa-1}f_{j,\kappa}\overline{\phi^*_{2(\kappa^2+2j\kappa)}}\sum_{ |u|\le \kappa-1}z^{\kappa^2+2j\kappa+u}\alpha_{j,u}=\sum_{j=0}^{\kappa-1}f_{j,\kappa}s_{\kappa^2+2j\kappa}\sum_{|u|\leq \kappa-1}\alpha_{j,u}z^u.
\end{align}

Then, 
\begin{equation}\label{step-11}
\|A-B\|_{2,\sigma}\to 0, \quad \text{as}\quad \kappa\to\infty\,.
\end{equation}
\end{lemma}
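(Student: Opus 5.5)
Set $N_j:=\kappa^2+2j\kappa$ and write $A-B=\sum_{j=0}^{\kappa-1}f_{j,\kappa}D_j$, where
\[
D_j:=\sum_{|u|\le\kappa-1}\alpha_{j,u}\bigl(s_{N_j+u}-z^us_{N_j}\bigr)=\sum_{|u|\le \kappa-1}\alpha_{j,u}z^{N_j+u}\,\overline{\bigl(\phi^*_{2(N_j+u)}-\phi^*_{2N_j}\bigr)},\qquad z\in\T .
\]
The plan is to show that $\phi^*_{2N_j+2u}-\phi^*_{2N_j}$ is governed, to first order, by the Verblunsky coefficients in the window $I_j:=[2N_j-2\kappa,2N_j+2\kappa]$. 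The windows $I_j$ cover a range of indices of size $\sim\kappa^2$ located near $2\kappa^2$. Since $\{\gamma_n\}\in\ell^2$, the tail $\varepsilon_\kappa^2:=\sum_{s\ge 2\kappa^2-2\kappa}|\gamma_s|^2$ tends to $0$. The goal is to bound $\|A-B\|_{2,\sigma}$ by a quantity that vanishes with $\varepsilon_\kappa$.

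\emph{Step 1: telescoping and linearization.} Iterating the second line of \eqref{opuc1} gives
\[
\Phi^*_{n+k}-\Phi^*_n=-\sum_{s=n}^{n+k-1}\gamma_s z\Phi_s .
\]
The norms satisfy $\|\Phi_m\|_{2,\sigma}^2=\prod_{s<m}(1-|\gamma_s|^2)$, so the ratios of normalizations between indices in the same window differ from $1$ by $O(\varepsilon_\kappa^2)$. This lets me replace $\phi^*$ by $\Phi^*$ (and $\phi_s$ by $\Phi_s$) at a cost that I will absorb into the error terms. The result is
\[
D_j=-\sum_{s\in I_j}\gamma_s\,\overline{\phi_s}\,z^{N_j-1}\,P_{j,s}(z)+E_j,\qquad P_{j,s}(z):=\pm\!\!\sum_{u:\ s \text{ between } 2N_j \text{ and } 2N_j+2u}\!\!\alpha_{j,u}z^{u}.
\]
Here $P_{j,s}$ is a trigonometric polynomial of degree $<\kappa$. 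Its coefficients are $O(\kappa^{-1})$, so $\|P_{j,s}\|_\infty\lesssim 1$. The error $E_j$ collects the normalization defects and should satisfy $\|E_j\|_{2,\sigma}\lesssim \varepsilon_\kappa\bigl(\sum_{s\in I_j}|\gamma_s|^2\bigr)^{1/2}$ or better. Summing over $j$ with the triangle inequality costs a factor $\kappa$ at most. To make that affordable, I will bound each $E_j$ by $\kappa^{-1/2}$ times window norms, using $|\alpha|\lesssim\kappa^{-1}$ and Cauchy--Schwarz over $u$.

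\emph{Step 2: the main term via almost orthogonality.} After Step 1, $A-B$ is, up to errors,
\[
-\sum_s \gamma_s\,\overline{\phi_s}\,G_s(z),
\]
where $G_s=f_{j,\kappa}z^{N_j-1}P_{j,s}$ for the (at most two) $j$ with $s\in I_j$. By \eqref{perp1}, the functions $\overline{\phi_s}z^{\lceil s/2\rceil}$ are orthogonal to trigonometric monomials of degree well below $s/2$. The multipliers $G_s z^{-\lceil s/2\rceil}$ are trigonometric polynomials of degree $O(\kappa)$. Hence the Gram matrix of $\{\overline{\phi_s}G_s\}$ in $L^2_\sigma$ is banded, with bandwidth $O(\kappa)$.

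A plain Schur test on this band loses a factor $\kappa$, which is exactly the loss that must be avoided. I would therefore not estimate the off-diagonal entries crudely. Instead I would decompose $\T$ into $\sim\kappa$ arcs of length $\sim\kappa^{-1}$ and use the Appendix estimate on $\|\phi_n\mathbb 1_{\text{arc}}\|_{2,\sigma}$ for arcs. On each arc the polynomials $P_{j,s}$ are essentially constant, since they have degree $<\kappa$. On that arc, $\sum_s\gamma_s\overline{\phi_s}G_s$ then behaves like $\sum_s c_s\overline{\phi_s}z^{m}$ with bounded coefficients. For such sums, orthonormality of $\{\phi_s\}$ (respectively of the $s_n$) gives an $\ell^2$ bound. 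This should yield
\[
\|A-B\|_{2,\sigma}^2\lesssim \sum_{s\ge 2\kappa^2-2\kappa}|\gamma_s|^2+o(1)\to 0 .
\]

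\emph{Main obstacle.} The hard step is Step 2: passing from the band-limited almost orthogonality to a bound that does not lose a factor $\kappa$. The multipliers $P_{j,s}$ have degree comparable to the bandwidth, so neither exact orthogonality nor a crude Schur bound suffices. This is where I expect the arc-localized norm estimate for $\phi_n$ from Appendix~\ref{app:1} to be indispensable. I would try first a partition of unity by $\kappa$ arcs with Fej\'er-type smooth cutoffs, so that the commutator between the cutoff and the multiplier $P_{j,s}$ is $O(1)$ relative to $\kappa^{-1}$ scales.
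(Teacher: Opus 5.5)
\textbf{There is a genuine gap in Step 2.} Your Step 1 is essentially the paper's per-block estimate. Step 2, however, is left open: the final bound is only asserted ("this should yield"). The obstacle you describe there does not actually exist. What you are missing is that the blocks $D_j$ are \emph{exactly} orthogonal in $L^2_\sigma$.

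With $N_j=\kappa^2+2j\kappa$, each $D_j$ is a combination of $s_{N_j+u}$ and $z^u s_{N_j}$ with $|u|\le\kappa-1$. By \eqref{perp1}, $s_{N_j+u}\perp_\sigma z^k$ for $|k|\le N_j+u-1$, and $z^us_{N_j}\perp_\sigma z^k$ for $|k-u|\le N_j-1$. Hence $D_j\perp_\sigma z^k$ for all $|k|\le N_j-\kappa$. On the other hand, $D_\ell$ lies in the span of the $z^k$ with $|k|\le N_\ell+\kappa-1$, and for $\ell<j$ this bound is at most $N_j-\kappa-1$. So $D_j\perp_\sigma D_\ell$ whenever $j\ne\ell$, and
\[
\|A-B\|_{2,\sigma}^2=\sum_{j=0}^{\kappa-1}|f_{j,\kappa}|^2\,\|D_j\|_{2,\sigma}^2 .
\]
The spacing $2\kappa$ between consecutive $N_j$ exceeds the spread $|u|\le\kappa-1$ on both sides. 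So the Gram matrix you describe is block diagonal, not merely banded. No Schur test, almost-orthogonality, or arc localization is needed. Re-indexing by $s$ is what hid this structure.

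Within a block, no orthogonality is needed either. Keep the $u$-organization. The triangle inequality in $u$, the bound $|\alpha_{j,u}|\lesssim\kappa^{-1}$, the telescoping from \eqref{opuc1}, and orthonormality of $\{\phi_s\}$ give
\[
\|D_j\|_{2,\sigma}\le\sum_{|u|\le\kappa-1}|\alpha_{j,u}|\,\bigl\|\phi^*_{2(N_j+u)}-\phi^*_{2N_j}\bigr\|_{2,\sigma}\lesssim\Bigl(\sum_{s\in I_j}|\gamma_s|^2\Bigr)^{1/2}.
\]
Summing the squares yields $\|A-B\|_{2,\sigma}^2\lesssim\sum_{s\ge 2\kappa^2-2\kappa}|\gamma_s|^2\to0$, which is the paper's proof.

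The plan you sketch instead is not just unnecessary but unreliable. A trigonometric polynomial of degree $\sim\kappa$ is not essentially constant on arcs of length $\sim\kappa^{-1}$; Bernstein's inequality only gives $|P'|\lesssim\kappa\|P\|_\infty$, so it can vary by $O(1)$ there. Nor is it explained how the arc estimate of Lemma~\ref{lem:aux} would avoid the loss in $\kappa$. The paper uses that estimate for Corollary~\ref{cor1}, where the multipliers are not band-limited.
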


\begin{proof}
We can write
\[
A=\sum_{j=0}^{\kappa-1} B_{j}+\sum_{j=0}^{\kappa-1} C_j\,,
\]
where
\[
B_j=f_{j,\kappa}s_{\kappa^2+2j\kappa}\sum_{|u|\leq \kappa-1}\alpha_{j,u}z^u
\]
and
\[
C_j:=\sum_{ |u|\le \kappa-1}f_{j,\kappa}\Bigl(\overline{\phi^*_{2(\kappa^2+2j\kappa+u)}}-\overline{\phi^*_{2(\kappa^2+2j\kappa)}}\Bigr)z^{\kappa^2+2j\kappa+u}\alpha_{j,u}\,.
\]
To finish the proof of the lemma, we only need to show that
\[
\left\|\sum_{j=0}^{\kappa-1} C_j\right\|_{2,\sigma}\to 0, \quad \kappa\to\infty\,.
\]
To this end, we write
\[
C_j=\sum_{ |u|\le \kappa-1}f_{j,\kappa}(s_{\kappa^2+2j\kappa+u}-s_{\kappa^2+2j\kappa}z^u)\alpha_{j,u}\,.
\]
Notice that $C_j\perp_\sigma C_\ell$ for $|j-\ell|\ge1$ by orthogonality property  \eqref{perp1} and by checking the degrees of polynomials involved.
So,
\[
\|\sum_{j=0}^{\kappa-1} C_j\|_{2,\sigma}^2=\sum_{j=0}^{\kappa-1}\left\|C_j\right\|_{2,\sigma}^2\,.
\]
Recall the OPUC recurrence \eqref{opuc1} and the connection between the monic orthogonal and orthonormal polynomials  \cite[eq.~ (1.5.1), p.~55]{SimonOPUC1}
\begin{equation}\label{opuc2}
\phi_n=\nu_n\Phi_n,\quad \nu_n:=\prod_{0\le s\le n-1}\rho^{-1}_s, \quad \rho_s:=(1-|\gamma_s|^2)^{\frac 12}, \quad \nu_\infty:=\prod_{0\le s\le \infty}\rho^{-1}_s\,.
\end{equation}
We write
\[
\phi^*_{2(\kappa^2+2j\kappa+u)}-\phi^*_{2(\kappa^2+2j\kappa)}=\nu_{2(\kappa^2+2j\kappa+u)}\Phi^*_{2(\kappa^2+2j\kappa+u)}-\nu_{2(\kappa^2+2j\kappa)}\Phi^*_{2(\kappa^2+2j\kappa)}
\]
\[
=(\nu_{2(\kappa^2+2j\kappa+u)}-\nu_{2(\kappa^2+2j\kappa)})    \Phi^*_{2(\kappa^2+2j\kappa+u)}+\nu_{2(\kappa^2+2j\kappa)}(\Phi^*_{2(\kappa^2+2j\kappa+u)}-\Phi^*_{2(\kappa^2+2j\kappa)})\,.
\]
So,
\begin{eqnarray*}
\|C_j\|_{2,\sigma}\lesssim |f_{j,\kappa}|\max_{u} |\alpha_{j,u}|\sum_{0\le u\le \kappa-1} \left(\left\|\sum_{s=2(\kappa^2+2j\kappa)}^{2(\kappa^2+2j\kappa+u)-1}\gamma_s\Phi_s\right\|_{2,\sigma}+\sum_{s=2(\kappa^2+2j\kappa-\kappa)}^{2(\kappa^2+2j\kappa+\kappa)}|\gamma_s|^2\right)+\\
 |f_{j,\kappa}|\max_{u} |\alpha_{j,u}|\sum_{-(\kappa-1)\le u<0} \left(\left\|\sum_{s=2(\kappa^2+2j\kappa+u)}^{2(\kappa^2+2j\kappa)-1}\gamma_s\Phi_s\right\|_{2,\sigma}+\sum_{s=2(\kappa^2+2j\kappa-\kappa)}^{2(\kappa^2+2j\kappa+\kappa)}|\gamma_s|^2\right)\,.
\end{eqnarray*}
Given our assumptions of $f_{j,\kappa},\alpha_{j,u}$ and the
 orthonormality of $\{\phi_s\}$, we get
\[
\|C_j\|_{2,\sigma}\lesssim \left(\sum_{s=2(\kappa^2+2j\kappa-\kappa)}^{2(\kappa^2+2j\kappa+\kappa)}|\gamma_s|^2\right)^{\frac 12}\,.
\]
Hence,
\[
\sum_{j=0}^{\kappa-1}\|C_j\|_{2,\sigma}^2\lesssim \sum_{s>\kappa^2}|\gamma_s|^2\to 0
\]
as $\kappa\to\infty$ and the proof is finished.

\end{proof}

%\noindent {\bf Fill in the details!} Say that the same proof as before works for windows of size $P\kappa$ and then the tails can be estimated individually using the bounds on $\alpha_{j,u}$, i.e., (assuming $d=0$)
%\[
%\sum_{\alpha}\sum_{ |u|\ge P\kappa-1}f_{P\alpha,\kappa}s_{\kappa^2+2P\alpha \kappa+u}\alpha_{P\alpha,u}=\sum_{\beta=0}^{5\kappa^2} s_{\beta}c_\beta
%\]
%where
%\[
%\sum_\beta |c_\beta|^2\le_\ell P^{-2\ell}\,.
%\]

\noindent {\bf Definition.} Take $\ell\in \N$. We will say that a function $h$, defined on $\T$, is $\ell$-localized to an arc $I\subset \T, |I|\sim \kappa^{-1}$ if $|h(z)|\le_\ell (1+P)^{-\ell}$ as long as $\text{dist}_{\T}(z,I)\ge P\kappa^{-1}$ for every $0\le P\lesssim  \kappa$. If $h$ is $\ell$-localized on $I$ for every $\ell\in \N$, we will say that $h$ is $\infty$-localized to $I$.

\begin{lemma}\label{post1}

 Let
\[
\lambda_j(z)=\sum_{u=-(\kappa-1)}^{\kappa-1}\alpha_{j,u}z^u\,,
\] where  $|\lambda_j|$ is $2$-localized to $\{I_j\}$ and the arcs $\{I_j\}|_{j=0}^{\kappa-1}$, which form a disjoint partition of $\T$, satisfy $|I_j|\sim \kappa^{-1}$.  We have for $B$ defined as in \eqref{bdef},
\begin{equation}\label{lisa3}
\|B\|^2_{2,\sigma}-\int\limits_{\T} \sum_{j=0}^{\kappa-1} |f_{j,\kappa}|^2|\lambda_j(z)|^2dm
\to 0, \quad \kappa\to\infty
\end{equation}
for all complex numbers $f_{j,\kappa}$ that satisfy $|f_{j,\kappa}|\lesssim 1$.
\end{lemma}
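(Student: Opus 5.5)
The idea is to freeze all the polynomials $\phi^*_{2N_j}$, $N_j:=\kappa^2+2j\kappa$, at the single index $M:=2\kappa^2$. After that, $\|B\|_{2,\sigma}^2$ becomes the integral of a trigonometric polynomial against $|\phi^*_M|^2d\sigma$. That measure is close to $dm$ in the Szeg\H o class, and the localization of the $\lambda_j$ together with the spacing $2\kappa$ of the frequency shifts $N_j$ make the cross terms vanish.

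\emph{Step 1 (the frozen sum).} Set
\[
G(z):=\sum_{j=0}^{\kappa-1}f_{j,\kappa}z^{N_j}\lambda_j(z),\qquad B':=\overline{\phi^*_M}\,G .
\]
Since $|\lambda_j|$ is $2$-localized to $I_j$ and the arcs $I_j$ have length $\sim\kappa^{-1}$ and partition $\T$, for each $z$ and each $P\ge 0$ there are only $O(1)$ indices $j$ with $\dist_\T(z,I_j)\sim P\kappa^{-1}$. Hence
\[
\sum_j|\lambda_j(z)|\lesssim\sum_{P\ge0}(1+P)^{-2}\lesssim 1,
\]
and so $\|G\|_\infty\lesssim 1$ uniformly in $\kappa$.

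\emph{Step 2 (from $\sigma$ to $m$).} In the Szeg\H o class one has $\int|\phi^*_n D-1|^2dm\to0$ and $\int|\phi^*_n|^2d\sigma_s\to0$. Because $\sigma'=|D|^2$ a.e., these two facts give
\[
\bigl\||\phi^*_n|^2d\sigma-dm\bigr\|_{TV}\to0 .
\]
It follows that
\[
\bigl|\|B'\|^2_{2,\sigma}-\|G\|^2_{2,m}\bigr|\le\|G\|_\infty^2\,\bigl\||\phi^*_M|^2\sigma-m\bigr\|_{TV}\to0 .
\]

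\emph{Step 3 (cross terms vanish).} Expand
\[
\|G\|_{2,m}^2=\sum_{j,l}f_{j,\kappa}\overline{f_{l,\kappa}}\int z^{2\kappa(j-l)}\lambda_j\overline{\lambda_l}\,dm .
\]
The product $\lambda_j\overline{\lambda_l}$ has frequencies in $(-2\kappa,2\kappa)$. Therefore every term with $j\ne l$ is exactly zero, and $\|G\|_{2,m}^2=\sum_j|f_{j,\kappa}|^2\int|\lambda_j|^2dm$. Combined with Step~2, it remains only to show $\|B-B'\|_{2,\sigma}\to0$.

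\emph{Step 4 (the unfreezing error), the main obstacle.} Let $\delta_j:=\phi^*_{2N_j}-\phi^*_M$. Then $|B-B'|\le\sum_j|f_{j,\kappa}||\delta_j||\lambda_j|$, and Cauchy--Schwarz together with Step~1 give
\[
\|B-B'\|^2_{2,\sigma}\lesssim\sum_{j=0}^{\kappa-1}\int|\delta_j|^2|\lambda_j|\,d\sigma .
\]
The global bound from the proof of Lemma~\ref{lem1}, via \eqref{opuc1} and \eqref{opuc2}, is
\[
\|\delta_j\|_{2,\sigma}^2\lesssim\varepsilon_\kappa:=\sum_{s\ge M}|\gamma_s|^2 .
\]
Used crudely, this only yields $\kappa\,\varepsilon_\kappa$, which need not tend to zero. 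What is needed is an arc-localized version: since $|\lambda_j|$ is $2$-localized to $I_j$ and $|\lambda_j|\lesssim1$, we want
\[
\int|\delta_j|^2|\lambda_j|\,d\sigma\lesssim\kappa^{-1}\eta_\kappa+(\text{tails}),\qquad \eta_\kappa\to0 .
\]
In words, the $L^2_\sigma$ mass of $\phi^*_n-\phi^*_m$, for $n,m\in[2\kappa^2,6\kappa^2]$, restricted to an arc of length $\kappa^{-1}$ should be at most its proportional share. This is the localized OPUC norm estimate announced for the appendix.

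I would try to prove it as follows. Write $\delta_j$ through the recursion as $\sum_s\gamma_s z\Phi_s$ plus a $\nu$-correction. Pair it with a smooth bump $\psi$ adapted to $I_j$, with $\|\psi\|_\infty\lesssim1$ and $\int\psi\,dm\sim\kappa^{-1}$. Then compare $|\phi_s|^2d\sigma$ with $dm$ on the arc, and average over the translates $I_j$ so that only the block sums of $|\gamma_s|^2$ over ranges of length $\sim\kappa^2$ appear.

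The loss in this averaging should be logarithmic in $\kappa$. That is where hypothesis \eqref{loga} would absorb it, giving $\eta_\kappa\lesssim\log\kappa\sum_{s\sim\kappa^2}|\gamma_s|^2\to0$. Summing over the $\kappa$ indices $j$ then gives $\|B-B'\|^2_{2,\sigma}\lesssim\eta_\kappa+o(1)\to0$.
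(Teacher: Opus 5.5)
Your Steps 1--3 are correct: they show $\|B'\|^2_{2,\sigma}-\int\sum_j|f_{j,\kappa}|^2|\lambda_j|^2dm\to0$. The gap is Step 4, and it is not a technicality.

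Freezing every $\phi^*_{2N_j}$ at the single index $M=2\kappa^2$ creates index gaps $2N_j-M$ of size up to $\sim4\kappa^2$. The claim $\|B-B'\|_{2,\sigma}\to0$ then amounts to $\sum_j\int|\phi^*_{2N_j}-\phi^*_M|^2|\lambda_j|^2d\sigma\to0$, which is a version of \eqref{main}. That is the paper's key input beyond the Szeg\H{o} condition (Theorem~\ref{lem:PCA}), and the paper proves it only under \eqref{loga} or the PCA.

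The lemma you are proving assumes only $\|\{\gamma_n\}\|_2\le\frac12$, so \eqref{loga} is not available to you. Even granting it, your Step 4 is a heuristic (``the loss should be logarithmic''), not an argument. Lemma~\ref{lem:aux} does not supply the missing estimate either. It bounds $\int_I|\phi_m|^2d\sigma$ by $|I|$ plus errors, which is an $O(\kappa^{-1})$ bound per arc. You need $o(\kappa^{-1})$ for differences $\phi^*_n-\phi^*_m$ with $|n-m|\sim\kappa^2$. Finally, the paper's proof of Theorem~\ref{lem:PCA} uses \eqref{pol}, which is derived from the present lemma, so importing that argument would be circular.

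The missing idea is to avoid long index gaps altogether. Compare $B$ with the exactly orthogonal sum $A=\sum_j\sum_{|u|\le\kappa-1}f_{j,\kappa}\alpha_{j,u}s_{N_j+u}$.

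\begin{itemize}
\item The system $\{s_n\}$ is orthonormal in $L^2_\sigma(\T)$ and the indices $N_j+u$ are distinct. Plancherel therefore gives $\|A\|^2_{2,\sigma}=\int\sum_j|f_{j,\kappa}|^2|\lambda_j|^2dm$ exactly, with no Szeg\H{o} asymptotics. This replaces your Steps 2--3.
\item Write $A-B=\sum_jC_j$ with $C_j=\sum_{|u|\le\kappa-1}f_{j,\kappa}\alpha_{j,u}z^{N_j+u}\bigl(\overline{\phi^*_{2(N_j+u)}}-\overline{\phi^*_{2N_j}}\bigr)$. This involves only index gaps $<2\kappa$. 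The $C_j$ are mutually $\sigma$-orthogonal by \eqref{perp1} and degree counting.
\item The $2$-localization is used only to get $\int|\lambda_j|dm\lesssim\kappa^{-1}$, hence $|\alpha_{j,u}|\lesssim\kappa^{-1}$.
\item The recursion then gives $\|C_j\|^2_{2,\sigma}\lesssim\sum_{|s-2N_j|\le2\kappa}|\gamma_s|^2$. These windows overlap boundedly, so $\|A-B\|^2_{2,\sigma}=\sum_j\|C_j\|^2_{2,\sigma}\lesssim\sum_{s>\kappa^2}|\gamma_s|^2\to0$.
\end{itemize}

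The exact orthogonality of the error pieces $C_j$ is what removes the factor $\kappa$ in your $\kappa\varepsilon_\kappa$ bound.
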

\begin{proof}
We again write the sum
\begin{equation}\label{sum1}
A:=\sum_{j=0}^{\kappa-1}\sum_{ |u|\le \kappa-1}f_{j,\kappa}s_{\kappa^2+2j\kappa+u}\alpha_{j,u}
\end{equation}
in which all terms are $L^2_\sigma(\T)$-orthogonal. Then,
\[
\|A\|^2_{2,\sigma}=\sum_{j=0}^{\kappa-1} \sum_{|u|\le \kappa-1}|f_{j,\kappa}\alpha_{j,u}|^2=\int \sum_{j=0}^{\kappa-1} |f_{j,\kappa}|^2|\lambda_j(z)|^2dm
\]
by the Plancherel identity. On the other hand, 
\[
A=\sum_{j=0}^{\kappa-1} B_{j}+\sum_{j=0}^{\kappa-1} C_j\,,
\]
where we abbreviate
\[
B_{j}=\sum_{ |u|\le \kappa-1}f_{j,\kappa}\overline{\phi^*_{2(\kappa^2+2j\kappa)}}z^{\kappa^2+2j\kappa+u}\alpha_{j,u}=f_{j,\kappa}\overline{\phi^*_{2(\kappa^2+2j\kappa)}}z^{\kappa^2+2j\kappa}\lambda_j
\]
and
\[
C_j:=\sum_{ |u|\le \kappa-1}f_{j,\kappa}\Bigl(\overline{\phi^*_{2(\kappa^2+2j\kappa+u)}}-\overline{\phi^*_{2(\kappa^2+2j\kappa)}}\Bigr)z^{\kappa^2+2j\kappa+u}\alpha_{j,u}\,.
\]
To finish the proof of the lemma, we only need to show that
\[
\left\|\sum_{j=0}^{\kappa-1} C_j\right\|_{2,\sigma}\to 0, \quad \kappa\to\infty\,.
\]
To this end, we write $A_{m,j}=\{z\in \T:m\kappa^{-1}\leq \dist_{\T}(z,I_j)\leq (m+1)\kappa^{-1}\}$ and decompose $\T=\bigcup_{m}A_{m,j}$, so that 
\begin{align*}
\int\limits_{\T}|\lambda_j|dm\lesssim\int\limits_{A_0}|\lambda_j|dm+\kappa^{-1}\sum_{m\geq 1}(1+m)^{-2}\lesssim \kappa^{-1}.
\end{align*}
%\[
%\int\limits_\T |\lambda_j|dm\lesssim |I_j|\lesssim  \kappa^{-1}\,.
%\]
Then, since $\alpha_{j,u}=\int\limits_{\T}\lambda_jz^{-u}dm$, we have $ |\alpha_{j,u}|\lesssim \kappa^{-1}$, and we may proceed exactly as in Lemma~\ref{lem1}.
\end{proof}

\begin{remark}\label{almostortho} The system
\begin{equation}\label{cot1}
B_{j}=f_{j,\kappa}\overline{\phi^*_{2(\kappa^2+2j\kappa)}}z^{\kappa^2+2j\kappa}\lambda_j
\end{equation}
 is ``almost-orthogonal'' in $L^2_\sigma(\T)$ given our assumption on disjointness of $\{I_j\}$ and $\ell$-localization of $\lambda_j$ with sufficiently large $\ell$. This observation will be used later.
\end{remark}

	On the unit circle, we take a partition of unity $1=\sum_{j=0}^{\kappa-1} \omega_j(z),\, \omega_j(z):=\omega_0(z/z_j)$, $z\in \T$ where we recall $z_j=\exp(2\pi ij/\kappa)$ and $\omega_{0}(e^{ix})=h(\kappa x)$ where $h$ is a suitable smooth nonnegative function with compact support in $[-2\pi,2\pi]$. If
	\begin{equation} \omega_j(z)=\sum_{u\in\Z}\alpha_{j,u}z^u,\,j\in\{0,\dots,\kappa-1\}\,,\label{base_1}
	\end{equation}
	then $\alpha_{j,u}=\alpha_{0,u}z_j^{-u}$ and  $|\alpha_{j,u}|\le_\ell (\kappa (1+(|u|/\kappa)^\ell))^{-1}$ for all $\ell\in \N$.
Clearly, these $\{\omega_j\}$ satisfy conditions of the previous lemma where $I_j$ are arcs of length $4\pi/\kappa$ centered around $z_j$. \medskip

\begin{corollary}\label{cor1} 
Suppose $f_{j,\kappa}$ satisfies $|f_{j,\kappa}|\lesssim  1$.  Consider the partition $\{\omega_j\}|_{j=0}^{\kappa-1}$ and let
\begin{equation}\label{sum909}
A:=\sum_{j=0}^{\kappa-1}\sum_{|u|\le\kappa^2}f_{j,\kappa}s_{\kappa^2+2j\kappa+u}\alpha_{j,u}
\end{equation}
and 
 \begin{align*}
  B=\sum_{j=0}^{\kappa-1}
f_{j,\kappa}\overline{\phi^*_{2(\kappa^2+2j\kappa)}}z^{\kappa^2+2j\kappa}\sum_{u\in\Z}\alpha_{j,u}z^u=\sum_{j=0}^{\kappa-1}
f_{j,\kappa}s_{\kappa^2+2j\kappa}\sum_{u\in\Z}\alpha_{j,u}z^u\,,
 \end{align*}
for $\alpha_{j,u}$ defined as in \eqref{base_1}. Then, we have 
\begin{align}\label{AB0}
\|A-B\|_{2,\sigma}\to 0
\end{align}
and
\begin{equation}\label{pol}
 \sum_{j=0}^{\kappa-1} |f_{j,\kappa}|^2
\|\phi^*_{2(\kappa^2+2j\kappa)}\omega_j\|^2_{2,\sigma} 
-\int\limits_{\mathbb T} \sum_{j=0}^{\kappa-1} |f_{j,\kappa}|^2|\omega_j(z)|^2dm
=o(1), \quad \kappa\to\infty\,.
\end{equation}
\end{corollary}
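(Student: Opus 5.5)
The plan is to reduce both claims to Lemma~\ref{lem1} and Lemma~\ref{post1} by truncating the Fourier series of $\omega_j$. The coefficients satisfy $|\alpha_{j,u}|\le_\ell \kappa^{-1}(1+(|u|/\kappa)^\ell)^{-1}$, so the modes with $|u|\ge\kappa$ are negligible.

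For \eqref{AB0}, split the inner sums in $A$ and $B$ at $|u|\le \kappa-1$. The part of $A$ with $|u|\le\kappa-1$ is exactly the sum \eqref{sum0} with $\alpha_{j,u}$ as in \eqref{base_1}, and these satisfy $|\alpha_{j,u}|\lesssim\kappa^{-1}$. Likewise, the part of $B$ with $|u|\le\kappa-1$ is the $B$ of \eqref{bdef}. So Lemma~\ref{lem1} shows their difference tends to zero in $L^2_\sigma$.

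It remains to bound the two tails.

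\textbf{Tail of $A$.} Write it as $A_{\rm tail}=\sum_{j}\sum_{\kappa\le|u|\le\kappa^2}f_{j,\kappa}\alpha_{j,u}s_{\kappa^2+2j\kappa+u}$ and regroup it as $\sum_\beta c_\beta s_\beta$ with $\beta$ ranging over $[0,4\kappa^2]$. Since $\{s_\beta\}$ is orthonormal, $\|A_{\rm tail}\|_{2,\sigma}^2=\sum_\beta|c_\beta|^2$. Each $\beta$ receives at most one contribution per $j$, so by Cauchy--Schwarz
\[
\sum_\beta|c_\beta|^2\le \kappa\sum_j\sum_{|u|\ge\kappa}|\alpha_{j,u}|^2\lesssim \kappa\cdot\kappa\cdot\kappa^{-2}\sum_{|u|\ge\kappa}(1+(|u|/\kappa)^\ell)^{-2}\kappa^{-1}\cdot\kappa .
\]
This crude count gives only $O(1)$, so the Cauchy--Schwarz step must be refined. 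A fixed $\beta$ is hit by the pair $(j,u)$ with $u=\beta-\kappa^2-2j\kappa$, and as $j$ varies the values of $u$ are spaced by $2\kappa$. Hence $|c_\beta|\le\sum_j|\alpha_{j,u_j}|$ is dominated by a sum over $|u|\ge\kappa$ along an arithmetic progression of step $2\kappa$, giving $|c_\beta|\lesssim_\ell \kappa^{-1}\sum_{m\ge1}m^{-\ell}\lesssim\kappa^{-1}$. Moreover, only the $j$ with $|u_j|\ge\kappa$ contribute, and the decay gives $|c_\beta|\lesssim\kappa^{-1}$. Summing over the $O(\kappa^2)$ values of $\beta$ still yields only $O(1)$.

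So I would instead estimate directly:
\[
\sum_\beta|c_\beta|^2\le \sum_\beta\Bigl(\sum_j|\alpha_{j,u_j}|\Bigr)^2\le \sup_\beta\Bigl(\sum_j |\alpha_{j,u_j}|\Bigr)\sum_j\sum_{|u|\ge\kappa}|\alpha_{j,u}|.
\]
Here $\sum_{|u|\ge\kappa}|\alpha_{j,u}|\lesssim_\ell \kappa^{-1}\cdot\kappa\cdot 2^{-\ell}$, which is $O(1)$ per $j$ and too large. Refining further, I split at $|u|\ge P\kappa$ for a large fixed $P$ instead of $\kappa$. Lemma~\ref{lem1} adapts verbatim to windows of size $P\kappa$: the orthogonality $C_j\perp C_\ell$ holds for $|j-\ell|\ge P$, and the bound becomes $\lesssim_P\sum_{s>\kappa^2/2}|\gamma_s|^2$. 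For $|u|\ge P\kappa$ the above bound gives $\sum_j\sum_{|u|\ge P\kappa}|\alpha_{j,u}|\lesssim_\ell \kappa P^{1-\ell}$ and $\sup_\beta\sum_j|\alpha_{j,u_j}|\lesssim \kappa^{-1}P^{-\ell}$, so the tail is $\lesssim P^{1-2\ell}$.

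\textbf{Tail of $B$.} Its norm is at most $\sum_j\|s_{\kappa^2+2j\kappa}\sum_{|u|\ge P\kappa}\alpha_{j,u}z^u\|_{2,\sigma}$. The polynomial factors have sup norm $\lesssim P^{1-\ell}$, and the terms are almost orthogonal. I expect the cleanest route is to note that these Fourier tails are small in $L^\infty$ and to use the bound on $\|\sum_j f_j s_{N_j}g_j\|_{2,\sigma}$ given by the localized-norm estimate of the Appendix. Letting $\kappa\to\infty$ and then $P\to\infty$ gives \eqref{AB0}. This tail control is the main obstacle.

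For \eqref{pol}, apply Lemma~\ref{post1} with $\lambda_j=\omega_j$ (after the same truncation). This gives $\|B\|^2_{2,\sigma}-\int\sum_j|f_{j,\kappa}|^2|\omega_j|^2dm\to0$. Then expand $\|B\|^2_{2,\sigma}$ using $|s_{N}|=|\phi^*_{2N}|$ on $\T$. The diagonal terms give $\sum_j|f_{j,\kappa}|^2\|\phi^*_{2(\kappa^2+2j\kappa)}\omega_j\|^2_{2,\sigma}$. The off-diagonal terms vanish exactly when $|j-j'|\ge 2$, because $\omega_j\omega_{j'}=0$ for disjointly supported bumps. The remaining neighbouring terms $|j-j'|=1$ are handled as in Remark~\ref{almostortho}: I would show they are $o(1)$ by applying the same argument to the subfamilies of even and odd $j$, which removes the overlaps, and then using polarization.
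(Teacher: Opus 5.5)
Your proposal has two genuine gaps: the tail of $B$ in \eqref{AB0} is never estimated, and the derivation of \eqref{pol} passes through claims that are false.

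\textbf{The tail of $B$ in \eqref{AB0}.} Two parts of your argument are fine: comparing the pieces with $|u|\le P\kappa$ via Lemma~\ref{lem1} with wider windows, and bounding the tail of $A$ by orthonormality of $\{s_n\}$. The tail of $B$, which you yourself flag as the main obstacle, is left open, and the route you sketch does not work with a sharp cutoff. Write $N_j=\kappa^2+2j\kappa$ and $g_j=\sum_{|u|>P\kappa}\alpha_{j,u}z^u$. Then $g_j$ is $\omega_j$ minus its Dirichlet partial sum. It is $O_\ell(P^{1-\ell})$ in sup norm, but away from $I_j$ it generally decays only like $(1+\kappa\,\dist(z,I_j))^{-1}$. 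An almost-orthogonality argument based on Lemma~\ref{lem:aux} needs rapid decay in $|j-s|$. With only this $1/q$ decay, Cauchy--Schwarz together with \eqref{pi1} gives just $\|\sum_j f_{j,\kappa}s_{N_j}g_j\|^2_{2,\sigma}\le_\ell P^{2-2\ell}(\log\kappa)^2$, which does not tend to $0$ for fixed $P$.

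The paper avoids this in two steps. First, it splits $j$ into residue classes mod $P$. Second, it uses a smooth cutoff $\omega_j=\tau_j+\delta_j$, where $\widehat{\tau_j}$ is supported in $|n|\le 2\kappa\lfloor P/4\rfloor$, so Lemma~\ref{lem1} applies inside each class. The remainder $\delta_j$ is both $O_\ell(P^{-\ell})$ in sup norm and $\infty$-localized. Lemma~\ref{lem:aux} together with $|j_1-j_2|\ge P$ then controls $\sum_j f^{(d)}_{j,\kappa}s_{N_j}\delta_j$.

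A simpler repair is to drop the cutoff at $P\kappa$ altogether:
\begin{itemize}
\item Split $C_j=f_{j,\kappa}\sum_{|u|\le\kappa^2}\alpha_{j,u}(s_{N_j+u}-s_{N_j}z^u)$ into shells $C_j^{(r)}$, $r\kappa\le|u|<(r+1)\kappa$.
\item By \eqref{perp1}, $C_j^{(r)}\perp_\sigma C_{j'}^{(r')}$ once $2|j-j'|\ge r+r'+2$.
\item Each shell satisfies $\|C_j^{(r)}\|_{2,\sigma}\le_\ell(1+r)^{-\ell}\bigl(\sum_{|s-2N_j|\le 2(r+1)\kappa}|\gamma_s|^2\bigr)^{1/2}$.
\item The overlap range grows only linearly in $r$, so the decay absorbs it, and Schur's test gives $\|\sum_j C_j\|_{2,\sigma}\to0$.
\item The part of $B$ with $|u|>\kappa^2$ is $O(\kappa^{2-\ell})$ by the triangle inequality.
\end{itemize}

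\textbf{The derivation of \eqref{pol}.} Lemma~\ref{post1} needs $\deg\lambda_j\le\kappa-1$. That is what makes the index windows disjoint and gives $\|A\|^2_{2,\sigma}=\sum_j|f_{j,\kappa}|^2\|\lambda_j\|^2_{2,m}$, and it fails after a cutoff at $P\kappa$. Worse, neither $\|B\|^2_{2,\sigma}-\int\sum_j|f_{j,\kappa}|^2\omega_j^2\,dm\to0$ nor ``neighbouring cross terms are $o(1)$'' is true. Take $\sigma=m$ and $f_{j,\kappa}\equiv1$. Each cross term is $\int z^{-2\kappa}\omega_j\omega_{j+1}\,dm=\frac{1}{2\pi\kappa}\int e^{-2iy}h(y)h(y-2\pi)\,dy$. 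Together they contribute $\frac1\pi\int\cos(2y)h(y)h(y-2\pi)\,dy+o(1)$, which is nonzero for a generic bump $h$.

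Your even/odd idea is the right one, but use it directly and drop the polarization. Apply \eqref{AB0} to $f_{j,\kappa}\mathbb{1}_{\{j\ \mathrm{even}\}}$, giving sums $A_{\rm even}$ and $B_{\rm even}$.
\begin{itemize}
\item The supports of the even $\omega_j$ are disjoint and $|s_n|=|\phi^*_{2n}|$ on $\T$, so $\|B_{\rm even}\|^2_{2,\sigma}$ is exactly the even-$j$ part of the diagonal sum in \eqref{pol}.
\item By orthonormality of $\{s_n\}$ and Plancherel, $\|A_{\rm even}\|^2_{2,\sigma}=\|\sum_{j\ \mathrm{even}}f_{j,\kappa}z^{N_j}\sum_{|u|\le\kappa^2}\alpha_{j,u}z^u\|^2_{2,m}=\sum_{j\ \mathrm{even}}|f_{j,\kappa}|^2\|\omega_j\|^2_{2,m}+o(1)$.
\item Since $\|A_{\rm even}\|_{2,\sigma}\lesssim1$, \eqref{AB0} gives \eqref{pol} for even $j$, and the odd case is identical.
\end{itemize}
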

We provide the proof of the corollary in the Appendix. 

\begin{remark}\label{rr}In all statements of this section, including the next theorem, the indices $n$ of the polynomials $\phi^*_n$ are taken in the form $n=\kappa^2+\kappa j$ or $n=2(\kappa^2+2\kappa j)$. However, our proofs never exploit the arithmetic progression structure in $j$, and we make this choice only for convenience. In fact, all results hold for ensembles $\{\phi^*_{\lambda_{j,\kappa}}\}|_{j=0}^{N}$ where $C_1\kappa^2\le \lambda_{0,\kappa}<\ldots<\lambda_{N,\kappa}\le C_2\kappa^2$ where $N\sim \kappa$ and $\lambda_{j+1,\kappa}-\lambda_{j,\kappa}\sim \kappa$.
\end{remark}

The following quantity's asymptotics will be important in the proof of the main theorem. 
\begin{definition}
Define the quantity  
\begin{align*}
\Sigma_\kappa:=\sum_{j=0}^{\kappa-1}  f_{j,\kappa}\phi^*_{\kappa^2+\kappa j}\omega_j \,,
\end{align*}
where $f_{j,\kappa}$ are complex numbers that satisfy $|f_{j,\kappa}|\lesssim 1$.
\end{definition}
We write $d\sigma=\sigma'dm+d\sigma_s$. Under the Szeg\H o condition (see \cite[Theorem 2.4.1, p.~144]{SimonOPUC1}), $\phi^*_n\to \Pi$ in $L^2_\sigma(\T)$ where $\Pi:=D^{-1}\cdot \mathbb{1}_{E_{{\rm ac},\sigma}}$, $D$ is the Szeg\H{o} function \eqref{eq:Szegofxn}, and the set $E_{{\rm ac},\sigma}$ is the complement of the support of the singular part of $\sigma$. Our immediate goal will be to show that
\begin{equation}\label{main}
\left\|\Sigma_\kappa-\Pi\sum_{j=0}^{\kappa-1} f_{j,\kappa}\omega_j\right\|_{2,\sigma}\to 0,\,\kappa\to \infty
\end{equation}
under various assumptions. This serves as one of the main inputs in the proof of Theorem~\ref{thm:main}, and we begin by proving it is implied by \eqref{loga}. \medskip

\begin{theorem}\label{lem:PCA} Suppose  $|f_{j,\kappa}|\lesssim 1$.  Then, \eqref{loga} implies \eqref{main}.
%\begin{equation}\label{main7}
%\Sigma_\kappa-\Pi\sum_jf_{j,\kappa}\omega_j\to 0, \quad\kappa\to\infty
%\end{equation}
%in $L^2_\sigma$. 

\end{theorem}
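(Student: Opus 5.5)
Since $\sum_j\omega_j=1$ and $\|\omega_j\|_\infty\lesssim1$, the target \eqref{main} reads
\[
\Sigma_\kappa-\Pi\sum_j f_{j,\kappa}\omega_j=\sum_{j=0}^{\kappa-1} f_{j,\kappa}\bigl(\phi^*_{n_j}-\Pi\bigr)\omega_j,\qquad n_j:=\kappa^2+\kappa j .
\]
The plan is to prove it in two steps. First we replace every $\phi^*_{n_j}$ by the single polynomial $\phi^*_{n_0}$ with $n_0=\kappa^2$. Then we use $\|\phi^*_{n_0}-\Pi\|_{2,\sigma}\to0$, which holds under the Szeg\H o condition. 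Indeed, $\sum_j f_{j,\kappa}\omega_j$ is bounded uniformly in $\kappa$, because the $\omega_j$ have bounded overlap. Hence
\[
\Bigl\|(\phi^*_{n_0}-\Pi)\sum_j f_{j,\kappa}\omega_j\Bigr\|_{2,\sigma}\lesssim\|\phi^*_{n_0}-\Pi\|_{2,\sigma}\to0 .
\]
So everything reduces to showing
\[
\Bigl\|\sum_j f_{j,\kappa}\bigl(\phi^*_{n_j}-\phi^*_{n_0}\bigr)\omega_j\Bigr\|_{2,\sigma}\to0 .
\]

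For the difference we use \eqref{opuc1} and \eqref{opuc2}, as in the proof of Lemma~\ref{lem1}:
\[
\phi^*_{n_j}-\phi^*_{n_0}=(\nu_{n_j}-\nu_{n_0})\Phi^*_{n_j}-\nu_{n_0}\sum_{s=n_0}^{n_j-1}\gamma_s z\Phi_s .
\]
The first term is harmless. Since $|\nu_{n_j}-\nu_{n_0}|\lesssim\sum_{s\ge\kappa^2}|\gamma_s|^2$, its contribution is $o(1)$ after multiplying by $\omega_j$ and summing, using $|\omega_j|\le1$, $\|\Phi^*_n\|_{2,\sigma}\le1$, and the bounded overlap. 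The main term is
\[
\sum_j f_{j,\kappa}\,\omega_j\sum_{s=n_0}^{n_j-1}\gamma_s z\Phi_s .
\]
Swapping the sums turns it into $\sum_{s=n_0}^{n_{\kappa-1}}\gamma_s z\phi_s\,g_s$ up to the factors $\nu$, where $g_s:=\sum_{j:\,n_j>s}f_{j,\kappa}\omega_j$. Each $g_s$ is a smooth function of $z$ and of a cumulative index. The orthonormality of $\{\phi_s\}$ is destroyed by the multipliers $g_s$. Notice, however, that $g_s$ is constant in $s$ on each block $[n_{j-1},n_j)$ of length $\kappa$, and $\omega_j$ is a trigonometric polynomial of effective degree $\sim\kappa$ (its coefficients decay like $\kappa^{-1}(1+|u|/\kappa)^{-\ell}$). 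So within one block we have a sum $\sum_{s\in\text{block}}\gamma_s\phi_s$ multiplied by a function of frequency $\lesssim\kappa$. Blocks separated by more than a bounded number of steps are almost orthogonal, by \eqref{perp1}; this is the same mechanism as in Remark~\ref{almostortho}.

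The difficulty, which I expect to be the main obstacle, is the interaction between blocks and frequency localization. A crude bound gives
\[
\Bigl\|\omega_j\sum_{s\in\text{block}}\gamma_s\phi_s\Bigr\|_{2,\sigma}\lesssim\Bigl(\sum_{s\in\text{block}}|\gamma_s|^2\Bigr)^{1/2}.
\]
Summing over $\kappa$ blocks, with the partial-sum structure $\sum_{s<n_j}$, the Cauchy--Schwarz or Rademacher--Menshov type loss for maximal partial sums of an orthonormal series is a factor $\log\kappa$. This is exactly where \eqref{loga} enters: we need
\[
\log(\kappa^2)\sum_{s=\kappa^2}^{2\kappa^2}|\gamma_s|^2\to0 .
\]
Concretely, I would proceed as follows.
\begin{enumerate}
\item Localize in space. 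On the support of $\omega_j$, the maximal function
\[
\max_{j}\Bigl|\sum_{s=n_0}^{n_j-1}\gamma_s\phi_s\Bigr|
\]
controls the sum. Since $\omega_j$ has width $\kappa^{-1}$ and we may lose only bounded factors, I would use the arc-localized estimate for the norm of $\phi_n$ promised in Appendix~\ref{app:1}. It replaces pointwise values by averages over arcs of length $\kappa^{-1}$, which are essentially constant at frequency scale $\kappa$.
\item Apply the Rademacher--Menshov inequality, which bounds the maximal partial sum by
\[
\log\kappa\,\Bigl(\sum_{s=\kappa^2}^{2\kappa^2}|\gamma_s|^2\Bigr)^{1/2}.
\]
\item Conclude via \eqref{loga}.
\end{enumerate}
The reduction to the maximal function is justified because the $\omega_j$ have bounded overlap. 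The delicate technical point is that $\phi_s$ is orthonormal only with respect to $\sigma$, not with respect to $\sigma$ restricted to arcs; the arc-localized OPUC estimate is what handles this.
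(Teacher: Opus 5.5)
You reduce correctly to the main term $\sum_j f_{j,\kappa}\omega_j S_{n_j}$, with $S_n:=\sum_{s=n_0}^{n-1}\gamma_s z\Phi_s$, but the last step does not close, because you have conflated a norm with its square. The Rademacher--Menshov inequality bounds the $L^2_\sigma$ \emph{norm} of $\max_j|S_{n_j}|$ by $\log\kappa\,\bigl(\sum_{s=\kappa^2}^{2\kappa^2}|\gamma_s|^2\bigr)^{1/2}$, so your route gives
\[
\Bigl\|\sum_{j} f_{j,\kappa}\,\omega_j S_{n_j}\Bigr\|_{2,\sigma}^2\lesssim(\log\kappa)^2\sum_{s=\kappa^2}^{2\kappa^2}|\gamma_s|^2 .
\]
But \eqref{loga} only says $\log\kappa\sum_{s=\kappa^2}^{2\kappa^2}|\gamma_s|^2\to0$, so your bound on the squared norm is only $o(\log\kappa)$. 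It genuinely diverges for admissible sequences. Let $\gamma$ carry $\ell^2$-mass $1/(k\log^2 k)$, spread evenly, on each dyadic block $[2^k,2^{k+1})$. Then $\gamma\in\ell^2$ and \eqref{loga} holds, but $(\log\kappa)^2\sum_{s=\kappa^2}^{2\kappa^2}|\gamma_s|^2\sim k/\log^2k\to\infty$ with $k\sim 2\log_2\kappa$. What your argument actually proves is the theorem under $(\log n)^2\sum_{s=n}^{2n}|\gamma_s|^2\to0$, essentially $\{\gamma_n\log n\}\in\ell^2$. That is the stronger Menshov--Rademacher condition the paper says \eqref{loga} weakens. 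The full maximal function uses only orthonormality, for which the $\log N$ loss is sharp. It discards exactly the structure (the stopping index is tied to the arc carrying $\omega_j$) that saves the extra $\sqrt{\log\kappa}$. Lemma~\ref{lem:aux} does not restore this: it controls $\int_I|\phi_m|^2d\sigma$ and leads to the almost-isometry \eqref{pol}, not to a maximal inequality.

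The missing idea is to never estimate the quadratic term directly. The paper first truncates the Verblunsky coefficients to $[\kappa^2/2,3\kappa^2]$. This is legitimate because, by the band structure of the matrix $G$ of multiplication by $z$, the relevant inner products only see that window. Applying \eqref{pol} to the truncated system gives $\sum_j\int|\widetilde\Phi^*_{\kappa^2+2j\kappa}|^2\omega_{2j}^2\,d\widetilde\sigma=\int\sum_j\omega_{2j}^2\,dm+o(1)$. The paper then expands $|1-z\sum_s\widetilde\gamma_s\widetilde\Phi_s|^2$ under the integral as $I_1+I_2+I_3$. Since $I_1$ already matches the right-hand side up to $o(1)$, the quadratic term $I_3$ (your quantity) is forced to equal $-I_2+o(1)$. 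The cross term $I_2$ is \emph{linear} in $\widetilde\gamma'_s$, paired with $\Gamma(s,u)=\langle z^{u+1}\widetilde\phi_s,1\rangle_{\widetilde\sigma}$, whose $\ell^2$-norm in $s$ is $\lesssim\|\widetilde\gamma\|_2$ by the Szeg\H{o} theorem. Summation by parts in $j$ produces $\sum_{p\le\kappa'}1/p\sim\log\kappa$, so $|I_2|\lesssim\log\kappa\sum_{s=\kappa^2/2}^{3\kappa^2}|\gamma_s|^2+o(1)$. Here the logarithm multiplies the squared $\ell^2$-mass, which is exactly what \eqref{loga} controls.

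There is also a smaller gap. Your claim that the $(\nu_{n_j}-\nu_{n_0})\Phi^*_{n_j}$ term is harmless does not follow from $\|\Phi^*_n\|_{2,\sigma}\le1$ and bounded overlap. Those only give $\sum_j\|\Phi^*_{n_j}\omega_j\|_{2,\sigma}^2\le\kappa$, while $\sum_{s\ge\kappa^2}|\gamma_s|^2$ need not be $o(\kappa^{-1/2})$. You need the uniform bound $\sum_j\|\phi^*_{n_j}\omega_j\|^2_{2,\sigma}=O(1)$ supplied by \eqref{pol}, as in \eqref{observe-1}.
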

\begin{proof} Assume without loss of generality that $\kappa$ is even and let $\kappa'=\kappa/2-1$. It is enough to show that
\[
\left\|\sum_{j\, {\rm even}} f_{j,\kappa}(\phi^*_{\kappa^2+\kappa j}-\Pi)\omega_j\right\|_{2,\sigma} \to 0, \quad \left\|\sum_{j\, {\rm odd}} f_{j,\kappa}(\phi^*_{\kappa^2+\kappa j}-\Pi)\omega_j\right\|_{2,\sigma} \to 0
\]
as $\kappa\to\infty$. We will prove the first bound, the second one is similar. Since $\omega_{j_1}\omega_{j_2}=0$ if $\dist_\T(z_{j_1},z_{j_2})>2\pi/\kappa$, the terms in the sum are orthogonal and 
\[
\left\|\sum_{j\, {\rm even}} f_{j,\kappa}(\phi^*_{\kappa^2+\kappa j}-\Pi)\omega_j\right\|_{2,\sigma}^2=\sum_{j=0}^{\kappa'}|f_{2j,\kappa}|^2\int |\phi^*_{\kappa^2+2\kappa j}-\Pi|^2\omega^2_{2j}d\sigma\lesssim \sum_{j=0}^{\kappa'}\int |\phi^*_{\kappa^2+2\kappa j}-\Pi|^2\omega^2_{2j}d\sigma
\]
and it is enough to assume that $f_{j,\kappa}=1$ for all $j$.
Since $\phi_{n}^*\to \Pi$ in $L^2_\sigma(\T)$, we only need to  prove
\[
\sum_{j=0}^{\kappa'}\int |\phi^*_{\kappa^2+2\kappa j}-\phi^*_{2\kappa^2}|^2\omega^2_{2j}d\sigma\to 0\,.
\] In the arguments that follow, we will often make no significant distinction between quantities \[\sum_{j=0}^{\kappa'} \phi^*_{\kappa^2+2j\kappa} \omega_{2j} \quad \text{ and }\quad \sum_{j=0}^{\kappa'} \Phi^*_{\kappa^2+2j\kappa} \omega_{2j}\] for large $\kappa$. In fact, they are asymptotically proportional to each other. This is due to the inequality
\begin{eqnarray}\label{observe-1}
\left\|\sum_{j=0}^{\kappa'} \phi^*_{\kappa^2+2j\kappa}\omega_{2j}-\nu_{\infty}\sum_{j=0}^{\kappa'}\Phi^*_{\kappa^2+2j\kappa} \omega_{2j}\right\|_{2,\sigma}^2=
\hspace{6cm}\\
\sum_{j=0}^{\kappa'}\|(\nu_{\kappa^2+2j\kappa}\Phi^*_{\kappa^2+2j\kappa}-\nu_{\infty}\Phi^*_{\kappa^2+2j\kappa}) \omega_{2j}\|^2_{2,\sigma}\stackrel{\eqref{opuc2}}{=} o(1) \sum_{j=0}^{\kappa'}\|\phi^*_{\kappa^2+2j\kappa} \omega_j\|^2_{2,\sigma}\stackrel{\eqref{pol}}{=}o(1),\,\nonumber
\end{eqnarray}
when $\kappa\to\infty$. Given that observation, \eqref{opuc1} and \eqref{opuc2}, we only need to prove that 
\begin{equation}\label{omeg-n}
D_\kappa:=\sum_{j=0}^{\kappa'} \left(\sum_{s=\kappa^2+2\kappa j}^{2\kappa^2-1} \gamma'_s \phi_s\right)\omega_{2j}=o(1), \quad \kappa\to\infty
\end{equation}
in $L^2_\sigma(\T)$, where
\[
\gamma'_\alpha:=\gamma_\alpha\cdot \prod_{\beta=\kappa^2}^{\alpha-1}(1-|\gamma_\beta|^2)^{\frac 12}, \quad \alpha\in [\kappa^2,2\kappa^2]\,.
\]
Notice that the quantity $\|D_\kappa\|_{2,\sigma}^2$ involves the sum of terms $\langle \phi_\alpha \omega_{2j},\phi_\beta\omega_{2j}\rangle_\sigma$, where $\alpha,\beta\ge \kappa^2$.  
If $\omega^2_0(z)=:\sum_{u\in \Z}\alpha'_{u,\kappa}z^u$,
then $\omega^2_j(z)=\sum_{u\in \Z}\alpha'_{j,u,\kappa}z^u$ where
$\alpha'_{j,u,\kappa}=\alpha'_{u,\kappa}z_j^{-u}$. The functions $\{\omega^2_j\}$ are real-valued and 
\begin{equation}\label{prod-fur}
|\alpha'_{j,u,\kappa}|\le_\ell (\kappa (1+(|u|/\kappa)^\ell))^{-1}
\end{equation}
for every $\ell\in \N$.
 Therefore,
for $\alpha,\beta\in [\kappa^2,2\kappa^2]$ and any fixed $\delta\in (0,1)$, one can write
\begin{equation}\label{opuc3}
\langle \phi_\alpha \omega_{2j},\phi_\beta\omega_{2j}\rangle_\sigma=:\sum_{|u|<\kappa^{1+\delta}}\bar{\alpha}_{2j,u,\kappa}'\langle \phi_\alpha,z^u\phi_\beta\rangle_\sigma+R\,,
\end{equation}
where, taking $\ell\ge 2,$ we have a bound for the tail $R$:
\begin{align*}
|R|= \left|\sum_{|u|\geq \kappa^{1+\delta}}\bar{\alpha}_{2j,u,\kappa}'\langle \phi_\alpha,z^u\phi_\beta\rangle_\sigma\right|\leq \sum_{|u|\geq \kappa^{1+\delta}}|{\alpha}_{2j,u,\kappa}'|\le_{\ell} \sum_{|u|\geq \kappa^{1+\delta}}(\kappa (1+(|u|/\kappa)^\ell))^{-1}\le_\ell \kappa^{-\delta(\ell-1)}\to 0
\end{align*}
by the Cauchy-Schwarz inequality using $\|\phi_\alpha\|_{2,\sigma}=\|z^{u}\phi_{\beta}\|_{2,\sigma}=1$. Since $\ell$ can be taken arbitrarily large, the contribution from $R$ is negligible.\medskip

Now, focus on the first term in \eqref{opuc3}. Since
$
\langle \phi_\alpha,z^u\phi_\beta\rangle_\sigma=\overline{\langle \phi_\beta,z^{-u}\phi_\alpha\rangle_\sigma}
$, 
it will be sufficient to consider $u\ge 0$ only. Recall that $\rho_j=(1-|\gamma_j|^2)^{\frac 12}$ and $\gamma_{-1}=-1$. 
We use the following identity  (see \cite{am}, formula (2.25)):
\begin{equation}\label{form-g}
G_{k,l}:=\langle z\phi_\ell,\phi_k\rangle_\sigma=\left\{
\begin{array}{cc}
-\bar\gamma_\ell \gamma_{k-1}\prod_{j=k}^{\ell-1}\rho_j, & 0\le k\le \ell,\\
\rho_\ell,& k=\ell+1,\\
0, & k\ge \ell+2
\end{array}
\right.
\end{equation}
and we changed the order in the inner product writing $\langle z\phi_\ell,\phi_k\rangle_\sigma$ instead of $\langle \phi_k,z\phi_\ell\rangle_\sigma$ since \cite{am} assumes that $\langle v_1,v_2\rangle$ is linear in the second vector $v_2$.
Define $H$ as the closure of all polynomials in $L^2_\sigma(\T)$, it is a Hilbert space and a subspace in $L^2_\sigma(\T)$. The system $\{\phi_j\}|_{j=0}^\infty$ forms an orthonormal basis in $H$ and 
the matrix $G$ represents the multiplication operator $f\mapsto zf$, an isometry in $H$, in this basis. Notice that for $k,\ell\ge \kappa^2$, the formula \eqref{form-g} involves parameters $\{\gamma_s\}$ for $s\ge \kappa^2-1$ only. Such a form of the matrix $G$ implies that, e.g., for the elements of the square $G^2$, denoted by $G^{(2)}_{i,j}$, we have
\[
G^{(2)}_{i,j}=\sum_{i-1\le d\le j+1}G_{i,d}G_{d,j}
\]
and $G^{(2)}_{i,j}=0$ if $i\ge j+3$. Arguing by induction,  one  shows that $G^{(s)}_{i,j}$ depends only on $\{\gamma_k\}$ with $k\ge \kappa^2-s-1$ as long as $i,j\ge \kappa^2$. Alternatively, one can iterate the formula (2.29) from \cite{am} $s-1$ times to come to the same conclusion.

Then, since $\alpha,\beta\le 2\kappa^2$ and $s\le \kappa^{1+\delta}$, we see that $\|D_\kappa\|_{2,\sigma}^2$  can be written explicitly as a function of the Verblunsky parameters $\{\gamma_j\}$ with $\kappa^2/2\le j\le 3\kappa^2$ whenever $\kappa$ is sufficiently large. We define the new system of recursion parameters $\{\widetilde\gamma_j\}$ as follows: $\widetilde \gamma_j=0$ if $ j\le \kappa^2/2$ or $j\ge 3\kappa^2$ and $\widetilde \gamma_j=\gamma_j$ for the other $j$.  Denote the corresponding measure and polynomials by $\widetilde \sigma$ and  $\widetilde \phi_j$, respectively. Notice that they all depend on $\kappa$.  Recall that our goal is to prove \eqref{omeg-n}, in which we can now replace $\sigma$ and $\phi_s$ by $\widetilde\sigma$ and $\widetilde \phi_s$. To this end, we apply \eqref{pol} and Remark \ref{rr} to this new system. That gives
\[
\int\limits_{\mathbb T} \sum_{j=0}^{\kappa'} |\widetilde\phi^*_{\kappa^2+2j\kappa}(z)|^2 \omega^2_{2j}(z)d\widetilde\sigma- \int\limits_{\mathbb T} \sum_{j=0}^{\kappa'}  \omega^2_{2j}(z)dm=o(1), \quad \kappa\to\infty
\]
and, by the observation \eqref{observe-1} made above,
\[
\int\limits_{\mathbb T} \sum_{j=0}^{\kappa'} |\widetilde\Phi^*_{\kappa^2+2j\kappa}(z)|^2 \omega^2_{2j}(z)d\widetilde\sigma- \int\limits_{\mathbb T} \sum_{j=0}^{\kappa'}  \omega^2_{2j}(z)dm=o(1), \quad \kappa\to\infty\,.
\]
By the OPUC recursion, the last bound is equivalent to
\begin{equation}\label{raz}
\int\limits_{\mathbb T} \sum_{j=0}^{\kappa'}  |1-z\sum_{s=\kappa^2/2}^{\kappa^2+2j\kappa-1}\widetilde\gamma_s\widetilde\Phi_{s}(z)|^2\omega^2_{2j}(z)d\widetilde\sigma- \int\limits_{\mathbb T} \sum_{j=0}^{\kappa'}\omega^2_{2j}(z)dm=o(1), \quad \kappa\to\infty.
\end{equation}
We write for the first term
\begin{equation}\label{sum-t}
\int\limits_{\mathbb T} \sum_{j=0}^{\kappa'}  |1-z\sum_{s=\kappa^2/2}^{\kappa^2+2j\kappa-1}\widetilde\gamma_s\widetilde\Phi_{s}(z)|^2  \omega^2_{2j}(z)d\widetilde\sigma=I_1+I_2+I_3\,,
\end{equation}
where
\begin{equation}\label{dva}
I_1=\int\limits_{\mathbb T} \sum_{j=0}^{\kappa'} \omega^2_{2j}(z)d\widetilde\sigma=\int\limits_{\mathbb T} \sum_{j=0}^{\kappa'}  \omega^2_{2j}(z)dm+o(1)\,.
\end{equation}
Here, we replaced $d\widetilde\sigma$ by $dm$ because the first $\kappa^2/2$ moments for $\widetilde \sigma$ are the same as for $dm$ (since $\widetilde\gamma_j=0, j\le \kappa^2/2$, see, e.g., \cite{SimonOPUC1}) and  the Fourier coefficients of the function ${\omega_{2j}^2}$ of index $n$ are negligible for $|n|>\kappa^{1+\delta}$ by \eqref{prod-fur}. Then, we consider
\begin{equation}\label{sum4}
I_2=-2\Re \sum_{j=0}^{\kappa'} \int\limits_{\mathbb T} z\sum_{s=\kappa^2/2}^{\kappa^2+2j\kappa-1}\widetilde\gamma_s\widetilde\Phi_{s}\omega_{2j}^2d\widetilde \sigma\,.
\end{equation}
Define $\Gamma(s,u):=\langle z^{u+1}\widetilde\phi_s(z),1\rangle_{\widetilde\sigma}$. By Plancherel's theorem, we have
\begin{equation}\label{gam-oh}
\sum_{s\ge 0} |\Gamma(s,u)|^2\le 1\,.
\end{equation}
Recalling that $\widetilde \gamma_j=0$ for $j\le \kappa^2/2$ and $j\ge 3\kappa^2$, we get 
\begin{align}\nonumber
\tilde \gamma_s':=\tilde \gamma_s\prod_{\ell\leq s-1}   (1-|\tilde \gamma_\ell|^2)^{1/2},\hspace{8cm}\\
\label{lota}
\sum_{j=0}^{\kappa'} \int z\sum_{s=\kappa^2/2}^{\kappa^2+2j\kappa-1}\widetilde\gamma_s\widetilde\Phi_{s}\omega_{2j}^2d\widetilde \sigma=\sum_{|u|\le 0.1\kappa^2}\alpha'_{u,\kappa}\sum_{j=0}^{\kappa'}\left(\sum_{s=\kappa^2/2}^{\kappa^2+2j\kappa-1} \widetilde\gamma_s'\Gamma(s,u)\right)e^{-4\pi iju/\kappa}+o(1)\,,\
\end{align}
where $o(1)$ captures the range $|u|> 0.1\kappa^2$ which gives a negligible contribution due to the strong decay of $\alpha'_{u,\kappa}$. Consider the main term
\begin{align*}
\sum_{|u|\le 0.1\kappa^2}\alpha'_{u,\kappa}\sum_{j=0}^{\kappa'}\left(\sum_{s=\kappa^2/2}^{\kappa^2+2j\kappa-1} \widetilde\gamma_s'\Gamma(s,u)\right)e^{-4\pi iju/\kappa}\,.
\end{align*}
For $u \equiv 0 \mod \kappa'+1 $, we use $|\alpha'_{n(\kappa'+1),\kappa}|\le_{\ell} \kappa^{-1}(|n|+1)^{-\ell}$ and the Cauchy-Schwarz inequality to write
\[
\sum_n\left|\alpha'_{n(\kappa'+1),\kappa}\sum_{j=0}^{\kappa'}\left(\sum_{s=\kappa^2/2}^{\kappa^2+2j\kappa-1} \widetilde\gamma_s'\Gamma(s,n(\kappa'+1))\right)\right|\stackrel{\eqref{gam-oh}}{\le_{\ell}} \|\widetilde\gamma\|_2\sum_n(|n|+1)^{-\ell}=o(1)
\]
as $\kappa\to\infty$ when $\ell\ge 2$ because $\|\widetilde\gamma\|_2\to 0$ as $\kappa\to\infty$.
For $u\not\equiv 0 \mod \kappa'+1$, 
we sum by parts in $j$ using the formula
\[
\sum_{k=0}^n h_kg_k=h_n\sum_{k=0}^ng_k-\sum_{j=0}^{n-1}(h_{j+1}-h_j)\sum_{k=0}^j g_k
\]
and
\[
\sum_{j=0}^{\kappa'}e^{-4\pi iju/\kappa}=0, \quad u\not\equiv 0\mod\kappa'+1
\]
to get
\begin{eqnarray*}
\sum_{j=0}^{\kappa'}\left(\sum_{s=\kappa^2/2}^{\kappa^2+2j\kappa-1} \widetilde\gamma_s'\Gamma(s,u)\right)e^{-4\pi iju/\kappa}=
\\-\sum_{j=0}^{\kappa'-1}\left(\sum_{s=\kappa^2+2j\kappa}^{\kappa^2+2(j+1)\kappa-1} \widetilde\gamma_s'\Gamma(s,u)\right)\frac{1-e^{-4\pi i(j+1)u/\kappa}}{1-e^{-4\pi iu/\kappa}}\,.
\end{eqnarray*}
Notice that 
\[
\left|\frac{1-e^{-4\pi i(j+1)p/\kappa}}{1-e^{-4\pi ip/\kappa}}\right|\lesssim \frac{\kappa}{p}, \quad 1\le p\le \kappa'\,.
\]
We write $u=n(\kappa'+1)+p$, where $p\in \{1,\ldots,\kappa'\}, n\in \Z$ and get, after applying the Cauchy-Schwarz inequality to the sum in $s$ and then to the sum in $j$:
\begin{eqnarray}\nonumber
|I_2|\le_{\ell} o(1)+\hspace{12cm}\\\nonumber
+\kappa^{-1}\sum_{n} (1+|n|)^{-\ell}\sum_{p=1}^{\kappa'}\frac{\kappa}{p}
\sum_{j=0}^{\kappa'-1}\left(\sum_{s=\kappa^2+2j\kappa}^{\kappa^2+2(j+1)\kappa-1}|\gamma_s|^2\right)^{\frac 12}\left(\sum_{s=\kappa^2+2j\kappa}^{\kappa^2+2(j+1)\kappa-1}
|\Gamma(s,n(\kappa'+1)+p)|^2\right)^{\frac 12}\le_{\ell}\\
 o(1)+\kappa^{-1}\sum_{n} (1+|n|)^{-\ell}\sum_{p=1}^{\kappa'}\frac{\kappa}{p}\left(\sum_{j=0}^{\kappa'-1}\sum_{s=\kappa^2+2j\kappa}^{\kappa^2+2(j+1)\kappa-1}|\gamma_s|^2\right)^{\frac 12}\left(\sum_{j=0}^{\kappa'-1}\sum_{s=\kappa^2+2j\kappa}^{\kappa^2+2(j+1)\kappa-1}
|\Gamma(s,n(\kappa'+1)+p)|^2\right)^{\frac 12}\,.
\nonumber
\end{eqnarray}
For any fixed $p$, we have
\[
\sum_{j=0}^{\kappa'-1}\sum_{s=\kappa^2+2j\kappa}^{\kappa^2+2(j+1)\kappa-1}|\gamma_s|^2
\le\sum_{s=\kappa^2}^{2\kappa^2}|\gamma_s|^2, \quad
\sum_{j=0}^{\kappa'-1}\sum_{s=\kappa^2+2j\kappa}^{\kappa^2+2(j+1)\kappa-1}
|\Gamma(s,n(\kappa'+1)+p)|^2\le\sum_{s=\kappa^2}^{2\kappa^2}|\Gamma(s,n(\kappa'+1)+p)|^2\,.
\]
We want to estimate the last quantity
\[
\sum_{s=\kappa^2}^{2\kappa^2}|\Gamma(s,u)|^2, \quad u=n(\kappa'+1)+p\,.
\]
Recall that the index $u$ satisfies $|u|\le 0.1\kappa^{2}$. Since $s\ge \kappa^2$, we have $\Gamma(s,u)=\langle z^{u+1}\widetilde \phi_s,1\rangle_{\widetilde\sigma}=0$ for negative $u$ in that range by orthogonality. Hence, we only need to consider $u\ge 0$. Let $\widetilde H$ denote the subspace obtained as the closure of all polynomials in $L^2_{\widetilde\sigma}(\T)$. For any $v\in L^2_{\widetilde\sigma}(\T)$, we have
\begin{equation}\label{pyth}
\|v\|_{2,\widetilde\sigma}^2=\|{\rm Proj}_{\widetilde H}v\|_{2,\widetilde\sigma}^2+\|{\rm Perp}_{\widetilde H}v\|_{2,\widetilde\sigma}^2
\end{equation}
by the Pythagorean Theorem.
Notice that $\|{\rm Perp}_{\widetilde H}v\|_{2,\widetilde\sigma}={\rm dist}_{2,\widetilde\sigma}(v,\widetilde H)$ and one gets ${\rm dist}_{2,\widetilde\sigma}(z^{-1},\widetilde H)\le {\rm dist}_{2,\widetilde\sigma}(z^{-1},z^u\widetilde H)={\rm dist}_{2,\widetilde\sigma}(z^{-u-1},\widetilde H)$ for any $u\ge 1$. Since $\|z^{-u}\|_{2,\widetilde\sigma}=1$, the identity \eqref{pyth} gives $\|{\rm Proj}_{\widetilde H}(z^{-u})\|_{2,\widetilde\sigma}\le \|{\rm Proj}_{\widetilde H}(z^{-1})\|_{2,\widetilde\sigma}$. Notice that
\[
\|{\rm Proj}_{\widetilde H}(z^{-u-1})\|_{2,\widetilde\sigma}^2=\sum_{s\ge 0}|\Gamma(s,u)|^2
\]
and $\|{\rm Proj}_{\widetilde H}(z^{-1})\|_{2,\widetilde\sigma}^2=1-{\rm dist}^2_{2,\widetilde\sigma}(z^{-1},\widetilde H)=1-\prod_{j\ge 0}(1-|\widetilde\gamma_j|^2)\lesssim \|\widetilde\gamma\|_2^2$ by the Szeg\H{o} theorem. Hence, 
\[
\sum_{s=\kappa^2}^{2\kappa^2}|\Gamma(s,n(\kappa'+1)+p)|^2\lesssim  \|\widetilde\gamma\|_2^2
\]
and
\begin{equation}\label{lot1}
|I_2|\lesssim  \log\kappa\left(\sum_{s=\kappa^2/2}^{3\kappa^2}
|\gamma_s|^2\right)+o(1).
\end{equation}
So that $I_2\to 0$ as $\kappa\to \infty$ since \eqref{loga} holds. \medskip
Recall \eqref{raz}, \eqref{dva} and that 
 $I_2\to 0$. Then, for the last term in \eqref{sum-t}, we get (compare with \eqref{omeg-n}, which we need to show for the $\widetilde\sigma$-system)
\[
I_3:=\int\limits_{\mathbb T}\sum_{j=0}^{\kappa'}  \left|\sum_{s=\kappa^2/2}^{\kappa^2+2j\kappa-1}\widetilde\gamma_s\widetilde\Phi_{s}(z)\right|^2  \omega^2_{2j}(z)d\widetilde\sigma=o(1), \quad \kappa\to\infty\,,
\]
which is the same as 
\begin{equation}\label{tri}
\int\limits_{\mathbb T} \sum_{j=0}^{\kappa'} |\widetilde\Phi^*_{\kappa^2+2j\kappa}-\widetilde \Phi^*_{\kappa^2/2}|^2  \omega^2_{2j}(z)d\widetilde\sigma =o(1), \quad \kappa\to\infty\,.
\end{equation}
We claim that
\begin{eqnarray}\label{chet}
\int\limits_{\T}  |\widetilde\Phi^*_{2\kappa^2}-\widetilde \Phi^*_{\kappa^2/2}|^2 d\widetilde\sigma\to 0, \quad \kappa\to\infty.
\end{eqnarray}
Indeed, by the Bernstein-Szeg\H{o} approximation \cite[Theorem~1.7.8]{SimonOPUC1}, it suffices to show
\begin{align*}
\int\limits_{\mathbb T}  |\widetilde\phi^*_{2\kappa^2}-\widetilde \phi^*_{\kappa^2/2}|^2 \frac{dm}{|\widetilde\phi^*_{2\kappa^2}|^2}\to 0, \quad \kappa\to\infty\,.
\end{align*}
To this end, we may compute 
\begin{eqnarray*}
\int\limits_{\mathbb T}  |\widetilde\phi^*_{2\kappa^2}-\widetilde \phi^*_{\kappa^2/2}|^2 \frac{dm}{|\widetilde\phi^*_{2\kappa^2}|^2}
&=2\left(1-\int\limits_{\mathbb T} \Re  \frac{\widetilde\phi^*_{\kappa^2/2}}{\widetilde\phi^*_{2\kappa^2}}dm\right)=2\left(1-\Re  \frac{\widetilde\phi^*_{\kappa^2/2}(0)}{\widetilde\phi^*_{2\kappa^2}(0)}\right)\\
&=2\left(1-\prod_{j=\kappa^2/2}^{2\kappa^2-1}(1-|\widetilde \gamma_j|^2)^{1/2}\right)\to 0
\end{eqnarray*}
as $\kappa\to \infty$. Putting \eqref{tri} and \eqref{chet} together, we have
\[
\int \sum_{j=0}^{\kappa'} |\widetilde\Phi^*_{\kappa^2+2j\kappa}-\widetilde \Phi^*_{2\kappa^2}|^2 \omega^2_{2j}(z)d\widetilde\sigma=o(1), \quad \kappa\to\infty\,,
\]
which is equivalent to \eqref{omeg-n} for the $\widetilde\sigma$ system. As we explained already, \eqref{omeg-n} for the $\widetilde\sigma$ system implies \eqref{omeg-n} for $\sigma$-system, and that finishes the proof. \end{proof}
\medskip

\section{Existence and Completeness of Wave Operators}\label{mainsection}

In this section, we modify the partition of unity defined above to depend on a continuous parameter. Throughout this section, $\kappa$ will be the integer related to $T$ by $\kappa:=\lfloor \sqrt{T} \rfloor$, and we form the partition as before, with
\begin{align*}
\omega_{(T)}(e^{ix})=h(\kappa x),
\end{align*}
where $\omega_{j}(z)=\omega_{(T)}(z/z_j)$, and with roots of unity $z_j=\exp(2\pi i j/\kappa )$ and $h\geq 0$ again chosen so that 
\begin{align*}
1=\sum_{j=0}^{\kappa-1}\omega_j(z).
\end{align*}

Before proving Theorem~\ref{thm:main}, we first prove a lemma allowing for the control of the evolution of the bump functions $\omega_j(z)$ just defined above. It is a direct application of the method of nonstationary phase.
\begin{lemma}\label{lem:evobump}
Let $T>0$ and 
\begin{align}\label{defEJ}
E_j(n)&:=\langle e^{iT\cos(x)}\omega_j(z),z^{n}\rangle_{m}=\frac1{2\pi}\int\limits_{0}^{2\pi} e^{i(T \cos x-nx)}\omega_j(e^{ix})dx
\end{align}
and define $n_j=-T\sin(2\pi j/\kappa)$. Then, for any $\ell\in \Z_+$, we have 
\begin{align}\label{eq:rapid}
|E_j(\lceil n_j\rceil+u)|\leq_{\ell}\kappa^{-1}\left(|u|/\kappa+1 \right)^{-\ell}.
\end{align}

\end{lemma}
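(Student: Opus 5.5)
Substituting $x=x_j+y$ with $x_j=2\pi j/\kappa$, I would write $\omega_j(e^{ix})=h(\kappa y)$ (understood periodically; for large $\kappa$ the support $|y|\le 2\pi/\kappa$ is a small arc). This gives
\[
E_j(\lceil n_j\rceil+u)=\frac{e^{-i(\lceil n_j\rceil+u)x_j}}{2\pi}\int e^{i\Psi(y)}h(\kappa y)\,dy,\qquad \Psi(y)=T\cos(x_j+y)-(\lceil n_j\rceil+u)y .
\]
The trivial bound $|E_j|\le \kappa^{-1}\|h\|_\infty/ (2\pi)\cdot 4\pi\lesssim\kappa^{-1}$ already handles $|u|\lesssim\kappa$. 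For $|u|\ge C\kappa$ with $C$ a large absolute constant, I will use nonstationary phase, i.e.\ integrate by parts $\ell$ times.

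The phase derivative is
\[
\Psi'(y)=-T\sin(x_j+y)-\lceil n_j\rceil-u=-T\big(\sin(x_j+y)-\sin x_j\big)-u-\theta,
\]
with $|\theta|<1$. On the support $|y|\le 2\pi/\kappa$ we have $|T(\sin(x_j+y)-\sin x_j)|\le T|y|\le 2\pi T/\kappa\lesssim\kappa$, since $\kappa=\lfloor\sqrt T\rfloor$ gives $T\lesssim \kappa^2$. Hence for $|u|\ge C\kappa$ we get $|\Psi'(y)|\ge |u|/2$ on the support. The higher derivatives satisfy $|\Psi^{(k)}|\le T\lesssim\kappa^2$ for $k\ge2$.

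Next I write $e^{i\Psi}=(i\Psi')^{-1}\frac{d}{dy}e^{i\Psi}$ and integrate by parts $\ell$ times with the operator $L^*g=-\frac{d}{dy}\big(g/(i\Psi')\big)$; there are no boundary terms because of the compact support. Each application produces a sum of terms. When the derivative lands on $h(\kappa\cdot)$, it costs $\kappa$ and the factor $1/|\Psi'|$ gains $|u|^{-1}$. When it lands on $1/\Psi'$, it yields $\Psi''/\Psi'^2$, of size $\lesssim \kappa^2/|u|^2=(\kappa/|u|)\cdot(\kappa/|u|)$. The bookkeeping, which I would do by induction on $\ell$, shows that every term after $\ell$ steps is bounded by $C_\ell(\kappa/|u|)^{\ell}$ times $\sup|h^{(k)}|$. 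The key point is that each derivative of $\Psi'$ of order $k\ge1$ is $O(\kappa^2)$, which is at most $\kappa\cdot|u|$, so it is always compensated by an extra factor $1/\Psi'$. Integrating over the support of length $\lesssim\kappa^{-1}$ then gives $|E_j|\le_\ell \kappa^{-1}(\kappa/|u|)^\ell$.

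Combining the two regimes yields $|E_j(\lceil n_j\rceil+u)|\le_\ell\kappa^{-1}(1+|u|/\kappa)^{-\ell}$, which is \eqref{eq:rapid}.

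The main obstacle is the uniform control of the derivatives of $1/\Psi'$ in the integration by parts. I would handle it with the Faà di Bruno type observation that $(d/dy)^k(1/\Psi')$ is a sum of products of the form $\Psi'^{-1-r}\prod\Psi^{(m_i+1)}$ with $r$ factors, each of size $\le T\lesssim\kappa|u|/C$. This makes each such term at most $|\Psi'|^{-1}(\kappa)^{k}|u|^{-k}$ up to constants, as needed. The rounding $\lceil n_j\rceil$ and the periodicity of the support are harmless.
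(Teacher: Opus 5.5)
Your proof is correct and is essentially the paper's argument: both are nonstationary phase on an arc of length $\sim\kappa^{-1}$, and both rely on $T\lesssim\kappa^2$ to control the curvature of the phase.

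The paper does the bookkeeping more economically. After rescaling $x=x_j+s/\kappa$, it Taylor-expands the phase. The remainder $T\bigl(\cos(x_j+s/\kappa)-\cos x_j+\kappa^{-1}s\sin x_j\bigr)=O(Ts^2/\kappa^2)=O(1)$ is absorbed, together with $h(s)$, into an amplitude $g_T$ whose derivatives are bounded uniformly in $T$ and $j$. What remains is $\kappa^{-1}$ times the Fourier transform of $g_T$ at frequency $u/\kappa$. This gives both regimes of $u$ at once, with no derivatives of $1/\Psi'$ to track.

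There is one slip in your last paragraph. The $k$-th derivative of $1/\Psi'$ is bounded by $|\Psi'|^{-1}\kappa^k$, not by $|\Psi'|^{-1}\kappa^k|u|^{-k}$. The latter already fails for $k=1$: $|\Psi''|/\Psi'^2$ can be of size $\kappa^2/u^2$, which exceeds $\kappa/u^2$. The correct bound follows from your own observation that each factor $\Psi^{(m+1)}$ is $\lesssim\kappa|u|$ and comes with an extra $1/\Psi'$. It is exactly what the $\ell$-fold integration by parts needs to give $\kappa^{-1}(\kappa/|u|)^\ell$, so your conclusion stands.
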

\begin{proof}
With $x_j=2\pi j/\kappa$, we write
\begin{align*}
E_j(n)&=\frac{1}{2\pi}\int\limits_{x_j-\frac{2\pi}{\kappa}}^{x_j+\frac{2\pi}{\kappa}}e^{i(T \cos x-nx)}\omega_j(e^{ix})dx\,, 
\end{align*}
since $\supp \,\omega_j\subset\{ e^{ix}:x\in[x_j-2\pi/\kappa,x_j+2\pi/\kappa] \}$.
Changing variables $x\mapsto x_j+s/\kappa$, we have
\begin{align*}
    E_j(n)=\frac{1}{2\pi\kappa}\int\limits_{-2\pi}^{2\pi }e^{i(T\cos(x_j+\kappa^{-1} s)-n(\kappa^{-1}s+x_j))}h(s)ds\,,
\end{align*}
since $\omega_j(e^{i(x_j+\kappa^{-1} s)})=h(s)$. Taylor expanding the phase around $s=0$, we find
\begin{align*}
    E_j(n)&=\frac{1}{2\pi\kappa}\int\limits_{-2\pi}^{2\pi }\exp\left(-is (n+T \sin(x_j))/\kappa\right)g_T(s)ds\,,
\end{align*}
where $g_T(s)$ is compactly supported in the interval $[-2\pi,2\pi]$, with $g_T$ and all of its derivatives uniformly bounded in $T$ and $j$.
Notice that $n_j=-T\sin(x_j)$ is the point where the phase of the oscillatory part is stationary.  We may introduce $u=n+T\sin x_j$ and apply the method of nonstationary phase \cite{muscalu-schlag} to the integral
\begin{align*}
    \frac{1}{2\pi\kappa}\int\limits_{-2\pi}^{2\pi }\exp(-is(u/\kappa))g_T(s)ds
\end{align*}
to conclude \eqref{eq:rapid}.
\end{proof}

%\begin{lemma}\label{lem:Hardy}
%Let $T>0$. Suppose $\supp(\omega_j)\subseteq \T\setminus \{e^{i\theta}, \theta\in [-\delta,\delta]\cup [\pi-\delta,\pi+\delta]\}$ and let $E_j$ be as in \eqref{defEJ}. Then, for any $\ell\in \N$, 
%\begin{align}\label{eq:pointwisebound}
%\sup_{z\in \mathbb T}\left|e^{-iT\cos(x)}\sum_{s\geq 1}E_j(s)z^s-\mathbb{1}_{\{\Im(z)<0\}}(z)\omega_j(z)  \right|\le_{\ell,\delta} T^{-\ell}.
%\end{align}
%\end{lemma}
%\begin{proof}
%With the support condition, and taking $T$ large enough, we may assume $\omega_j$ is supported on either the upper or lower half of the circle. If $\omega_j$ is supported in $\{z:\Im(z)>0\}$, we  may apply the previous lemma to find for any $\ell\geq 2$, 
%\begin{align*}
%\left| e^{-iT\cos(x)}\sum_{s\geq 1}E_j(s)z^s\right|\lesssim \sum_{s\geq 1}\left|\int\limits_{0}^{2\pi}e^{i(T\cos(x)-sx)}\omega_j(e^{ix})dx\right| \le_{\ell,\delta}  \sum_{s\ge 0} \frac{1}{\kappa(1+|s-n_j|/\kappa)^\ell}\le_{\ell,\delta} \kappa^{-\ell}\,.
%\end{align*}
%Meanwhile, if $\omega_j(z)$ is supported in $ \{z:\Im(z)<0\}$, we write by Fourier inversion
%\begin{align*}
%e^{-iT\cos(x)}\sum_{s\geq 1}E_j(s)z^s=\omega_j(z)-e^{-iT\cos(x)}\sum_{s\leq 0}E_j(s)z^s
%\end{align*}
%and we may again estimate the last term similarly.
%\end{proof}

We may now prove our main theorem. The previous lemma allows for the control of the free evolution. Instead, to control the $J$-evolution, we make use of the results in the previous section.

\begin{proof}[Proof of Theorem~\ref{thm:main}.]

First, we compute the asymptotics of $e^{iTJ_0}f$ for $f$ in a dense subset of $\ell^2(\N)$. To this end, we extend $f\in \ell^2(\N)$ to $f^{ (\rm odd)}=\{f^{ (\rm odd)}_n\}\in\ell^2(\Z)$ in the odd fashion with $f^{ (\rm odd)}_0=0$, and define $J_0^{\mathbb Z}$ on $\Z$ as the discrete free Schr\"odinger operator. Then, $\{ e^{iTJ_0}f\}_{n\in \N}$ is the restriction of $\{e^{iTJ_0^{\Z}}f^{\rm (odd)}\}_{n\in \Z}$ to $\ell^2(\N)$. We compute the evolution $e^{iTJ_0^\Z}$ by passing to the Fourier side, and examine
$
e^{iT\cos x}\widehat f\,,
$
for $\widehat f$ an odd $L^2_m(\T)$ function. We denote this subspace by
\begin{align*}
L^2_{m,\rm odd}(\T):=\{ g\in L^2_m(\T): g(z)=-g(z^{-1})\}.
\end{align*}
Take $\widehat f\in L^2_{m,\rm odd}(\T)\cap C^\infty(\T)$ and suppose it satisfies
\begin{align}\label{momentumsupport}
\supp \widehat f\cap \{e^{i\theta}:\theta\in[0,\pi]\}
\subset
\{e^{i\theta}:\theta\in [\delta,\pi/2-\delta]\cup [\pi/2+\delta,\pi-\delta]\}
\end{align}
with some positive $\widehat f$-dependent parameter $\delta$. The set of $\widehat{f}$ satisfying these conditions is dense in $L^2_{m,\rm odd}(\T)$. By scaling, we may also assume that $\|\widehat f\|_{L^\infty(\T)}\le 1$.

\begin{center}
\begin{tikzpicture}[scale=2]
    \def\r{1}
    \def\d{18} % visual delta in degrees

    % Circle
    \draw[thick] (0,0) circle (\r);

    % Removed delta-neighborhoods
    \draw[line width=2pt, red] (-\d:\r) arc[start angle=-\d, end angle=\d, radius=\r];
    \draw[line width=2pt, red] (90-\d:\r) arc[start angle=90-\d, end angle=90+\d, radius=\r];
    \draw[line width=2pt, red] (180-\d:\r) arc[start angle=180-\d, end angle=180+\d, radius=\r];
    \draw[line width=2pt, red] (270-\d:\r) arc[start angle=270-\d, end angle=270+\d, radius=\r];

    % Allowed arcs
    \draw[line width=1.2pt, blue] (\d:\r) arc[start angle=\d, end angle=90-\d, radius=\r];
    \draw[line width=1.2pt, blue] (90+\d:\r) arc[start angle=90+\d, end angle=180-\d, radius=\r];
    \draw[line width=1.2pt, blue] (180+\d:\r) arc[start angle=180+\d, end angle=270-\d, radius=\r];
    \draw[line width=1.2pt, blue] (270+\d:\r) arc[start angle=270+\d, end angle=360-\d, radius=\r];

    % Axes ending at the circle
    \draw[gray] (-\r,0) -- (\r,0);
    \draw[gray] (0,-\r) -- (0,\r);

    % Mark special points
    \fill (1,0) circle (0.02);
    \fill (0,1) circle (0.02);
    \fill (-1,0) circle (0.02);
    \fill (0,-1) circle (0.02);

    % Labels
    \node[right] at (1,0) {$1$};
    \node[above] at (0,1) {$i$};
    \node[left] at (-1,0) {$-1$};
    \node[below] at (0,-1) {$-i$};

    % Description
    \node at (0,-1.55) {The set $\supp(\hat{f})$ is depicted in blue, with the excised arcs in red.};

\end{tikzpicture}
\end{center}

To obtain the asymptotics of $e^{iTJ_0^{\Z}}$ in the physical space $\ell^2(\Z)$, we wavepacket decompose $\hat{f}$. Consider the partition $\{\omega_j\}$ as in the preamble and write $\widehat f=\sum_{j=0}^{\kappa-1} \widehat f\omega_j$. By smoothness of $\widehat f$, we conclude that 
\[
\|\widehat f(z)-\sum_{j=0}^{\kappa-1} \widehat f(z_j) \omega_j\|_{2}\to 0, \quad T\to\infty\,.
\]
%Also, $|\widehat f(z_{j+1})-\widehat f(z_{j})|\lesssim T^{-1/2}$.
Since the evolution preserves the $L^2_m(\T)$-norm, we only need to compute the evolution of an individual bump which may be controlled through Lemma~\ref{lem:evobump}; setting for $E_j(n)$ as in \eqref{defEJ}, by Fourier inversion, we have 
\begin{align}\label{eq:fourierapprox}
\sum_{j=0}^{\kappa-1} \widehat f(z_j)\sum_{n\in\mathbb Z} E_j(n)z^{n}-e^{iT\cos x}\widehat f(z)\to 0, \quad T\to\infty
\end{align}
in $L^2_m(\T)$. Thus, using that $e^{iTJ_0}f$ is the restriction of the full-line evolution $e^{itJ_0^\Z}$ to $\ell^2(\N)$,
we examine the quantity 
\begin{align}
e^{-iTJ}\left\{ \sum_{j=0}^{\kappa-1}\widehat{f}(z_j)E_j(n)\right\}_{n\geq 1}.
\end{align}
After applying the spectral transformation for $J$, $U:\ell^2(\N)\to L^2_{\rho}([-1,1])$, given by 
\begin{align}\label{spectralmap}
( Uf)(\lambda)=\sum_{n=1}^\infty f_np_{n-1}(\lambda)
 \end{align}
 for $f=\{f_n\}\in \ell^2(\N)$, we may instead examine
\begin{align*}
e^{-iT\lambda}\sum_{n=1}^\infty \left(\sum_{j=0}^{\kappa-1}\widehat{f}(z_j)E_j(n)\right)p_{n-1}(\lambda)\in L^2_\rho([-1,1]),
\end{align*}
recalling that $p_n$ are the orthonormal polynomials corresponding to $\rho$, and $\lambda=\cos x=(z+z^{-1})/2$.
Making use of the relationship \eqref{pnchin}, it will be sufficient to work on the torus, and compute limits for
\begin{align}\label{eq:spectral1}
O:=\sum_{j=0}^{\kappa-1} \widehat f(z_j)e^{-iT\cos x} \sum_{n\geq 0} E_j(n+1)s_n(z)
\end{align}
and
\begin{align}\label{eq:spectral2}
\tilde O:=\sum_{j=0}^{\kappa-1} \widehat f(z_j)e^{-iT\cos x} \sum_{n\geq 0} E_j(n+1)\overline{s_n}(z)
\end{align}
in $L^2_\sigma(\T)$ as $T\to\infty$.
%With the results of the preceding section, we will prove that 
%\begin{align}\label{mainasymptotic}
%O=e^{-iT\cos x}\sum_{j\in J}  \widehat f(z_j) \sum_{u=-\lceil n_j\rceil +1}^\infty E_j (\lceil n_j\rceil +u)\overline{\phi^*_{2\lceil n_j\rceil }}(z)z^{\lceil n_j\rceil +u}+o(1)\,
%\end{align}
%in $L^2_\sigma(\T)$ as $T\to \infty$. 
We first rewrite
\begin{align*}
O&=e^{-iT\cos x}\sum_{j=0}^{\kappa-1}  \widehat f(z_j) \sum_{u=-\lceil n_j\rceil }^\infty E_j(\lceil n_j\rceil+u+1)s_{\lceil n_j\rceil+u}(z)\\
&=e^{-iT\cos x}\sum_{j=0}^{\kappa-1}  \widehat f(z_j) \sum_{u=-\lceil n_j\rceil}^\infty \alpha_{j,u}s_{\lceil n_j\rceil+u}(z)
\end{align*}
for the sequence $\alpha_{j,u}:=E_j(\lceil n_j\rceil+u+1)$, which  satisfies
\begin{equation}\label{skor}
|\alpha_{j,u}|\stackrel{\eqref{eq:rapid}}{\leq_\ell}  \kappa^{-1}(1+|u|/\kappa)^{-\ell}
\end{equation}
for any $\ell\in \Z^+$.
Since
\begin{align*}
\|\sum_{j=0}^{\kappa-1}&  \widehat f(z_j) \sum_{u > \kappa^2} \alpha_{j,u}s_{\lceil n_j\rceil+u}(z)\|_{2,\sigma}\leq_\ell \|\widehat{f}\|_\infty \sum_{j=0}^{\kappa-1}\sum_{u>\kappa^2} \kappa^{-1}(1+|u|/\kappa)^{-\ell} \leq_\ell \kappa^{-\ell+2}\to 0
\end{align*}
as $\kappa\to \infty$ for $\ell>2$, we have
\begin{align}\label{trio}
O=e^{-iT\cos x}\sum_{j=0}^{\kappa-1}  \widehat f(z_j) \sum_{-\lceil n_j\rceil+1\le u \leq \kappa^2} \alpha_{j,u}s_{\lceil n_j\rceil+u}(z)+o(1),
\end{align}
as $T\to \infty$. Recall that $\widehat f=0$ in the $\delta$-neighborhoods of the points $\pm 1$ and that $n_j=-T\sin(x_j)=-T\sin(2\pi j/\kappa)$. Hence, $\widehat f(z_j)\neq 0$ implies  that
\[
|n_j|\ge |T|\sin(\delta)\geq (\lfloor \sqrt{T}\rfloor)^2\sin(\delta)=\kappa^2(\sin\delta)\,.
\]
The bound \eqref{skor} shows that the contribution in the sum in \eqref{trio} coming from the range $n_j\le-\kappa^2(\sin\delta)$ is negligible, so we are left to study the case $n_j\ge \kappa^2(\sin\delta)$. That restriction makes us read the values of $\widehat f(z)$ in only the lower half-circle, and in fact, only at $z_j$ with $x_j\in [\pi+\delta,2\pi -\delta]$. Moreover, our assumption that $\widehat f=0$ in the $\delta$-neighborhoods of the points $\pm i$ guarantees that the restriction on the indices given by $\widehat f(z_j)\neq 0$ implies that $|n_{j+1}-n_j|\sim_\delta\kappa$.
Hence, Corollary \ref{cor1} and the Remark \ref{rr} are applicable, and we get
\begin{align}\label{phasecancel}
O=\sum_{j=0}^{\kappa-1}  \widehat f(z_j) \overline{\phi^*_{2\lceil n_j\rceil }}(z)    \Bigl(e^{-iT\cos x}\sum_{u\in \Z} \alpha_{j,u}z^{u+\lceil n_j\rceil}\Bigr)+o(1)=\hspace{4cm}\\
\sum_{j=0}^{\kappa-1}  \widehat f(z_j) \overline{\phi^*_{2\lceil n_j\rceil }}(z)    \Bigl(e^{-iT\cos x}\sum_{s\in \Z} E_j(s+1)z^{s}\Bigr)+o(1)=z^{-1}\mathbb{1}_{\{\Im(z)<0\}}(z)\sum_{j=0}^{\kappa-1}  \widehat f(z_j) \overline{\phi^*_{2\lceil n_j\rceil }}(z) \omega_{j}(z)+o(1)\nonumber
\end{align}
in $L^2_\sigma(\T)$, by Fourier inversion, and using our observation that the sum is over only indices $j$ with $x_j\in [\pi+\delta,2\pi -\delta]$.

Now, by Theorem~\ref{lem:PCA} and Remark \ref{rr}, we finally have 
\begin{align}\label{Olimit}
O\to z^{-1}\mathbb{1}_{\{ \Im(z)<0\}}(z)\overline{\Pi(z)}\widehat{f}(z)
\end{align}
in $L^2_\sigma(\T)$. Using that $\overline{s_n}(z)=s_n(1/z)$ by the evenness of $\sigma$, we have $\tilde O(z)=O(1/z)$, so that  
\begin{align}\label{tildeOlimit}
\tilde O\to -z\mathbb{1}_{\{\Im(z)>0\}}\Pi(z)\widehat{f}(z)
\end{align}
since $\overline{\Pi}(z)=\Pi(1/z)$ and  $\widehat{f}$ is odd. With this in hand, we may examine the full eigenfunction expansion 
\begin{align*}
\sum_{n\geq 0}c_n (e^{iTJ_0^{\Z}}f)_{n+1}(s_n(z)+\overline{s_{n}}(z))\,.
\end{align*}
We recall that under the Szeg\H o condition, we have $c_n\to 1/\sqrt{2}$. Thus, using \eqref{eq:fourierapprox} and the support restriction on $\widehat f$, we have by Lemma \ref{lem:evobump}
\begin{align*}
\sum_{n\geq 0}c_n (e^{iTJ_0^{\Z}}f)_{n+1}(s_n(z)+\overline{s_{n}}(z))&=\frac{1}{\sqrt{2}}\sum_{n\geq  T\sin(\delta)/2 } (e^{iTJ_0^{\Z}}f)_{n+1}(s_n(z)+\overline{s_{n}}(z))+o(1)\\
&=\frac{1}{\sqrt{2}}\sum_{n\geq 0} (e^{iTJ_0^{\Z}}f)_{n+1}(s_n(z)+\overline{s_{n}}(z))+o(1), 
\end{align*}
as $T\to \infty$ in $L^2_\sigma(\T)$. Thus by \eqref{Olimit} and \eqref{tildeOlimit}, along with \eqref{eq:fourierapprox}, we have 
\begin{align*}
\lim_{T\to\infty}e^{-iT\cos(x)}\sum_{n\geq 0}c_n (e^{iTJ_0^{\Z}}f)_{n+1}(s_n(z)+\overline{s_{n}}(z))&=\frac{1}{\sqrt{2}}(z^{-1}\mathbb{1}_{\Im(z)<0}\overline{\Pi(z)}-z\mathbb{1}_{\Im(z)>0}\Pi(z))\widehat{f}(z)
\end{align*}
in $L^2_\sigma(\T)$, thus establishing the limit $\Omega_+$ on a dense set, which yields the strong limit \eqref{eq:WO}.\smallskip

We now show that the wave operators are complete. Denote
\[
m(z):=\frac{1}{\sqrt{2}}(z^{-1}\mathbb{1}_{\Im(z)<0}\overline{\Pi(z)}-z\mathbb{1}_{\Im(z)>0}\Pi(z)).
\]
Recall that $E_{\rm ac,\sigma}$ is the complement of the support of the singular part of $\sigma$ in $\T$. We define the operator
\begin{align*}
&T_m: L^2_{m,\rm odd}(\T)\to L^2_{\sigma'dm,\rm even}(E_{\rm ac,\sigma}),\quad
(T_mf)(\cdot)= m(\cdot)f(\cdot)
\end{align*}
which we note is well-defined since $\sigma'(z)=|D(z)|^2$ and $m(z)=-m(z^{-1})$, so that $mf$ is even for $f$ odd. $T_m$ is also invertible, with inverse given by multiplication by $1/m(z)$.

We denote by $U_0:\ell^2(\N)\to L^2_{m,{\rm odd}}(\T)$ the bijection taking $\{f_n\}$ to $\widehat{ f^{\rm (odd)}}$ for $\{f^{\rm (odd)}_n\}\in \ell^2(\Z)$, the odd extension as defined above. Recall also that  $U$ is defined in \eqref{spectralmap}. We have shown above that with $V:L^2_{\rho}([-1,1])\to L^2_{\sigma,\rm even}(\T)$ given by $(Vg)(z)=g(\frac{z+z^{-1}}{2})$,
\begin{align*}
VU\Omega_+U_0^{-1}=T_m,
\end{align*}
so that since $T_m$ is onto, 
\begin{align*}
{\rm Ran}(VU\Omega_+)=L^2_{\sigma'dm,\rm even}(E_{\rm ac,\sigma}).
\end{align*}
Denoting $\rho_{\rm ac}$ the density of the absolutely continuous part of $\rho$, we note that $V$ restricts to a bijection between $L^2_{\rho_{\rm ac}d\lambda}([-1,1])$ and $L^2_{\sigma'dm,even}(E_{\rm ac,\sigma})$, and we may conclude ${\rm Ran}(U\Omega_+)=L^2_{\rho_{\rm ac}d\lambda}([-1,1])$ so that ${\rm Ran}(\Omega_+)=\ell^2_{\rm ac}(\N)$ as required.

\end{proof}

We now describe conditions on the coefficients of $J$ that allow for the application of the above theorem, proving Corollary~\ref{cor:main}. 

\begin{proof}[Proof of Corollary~\ref{cor:main}]

By the Kato-Rosenblum theorem \cite[Theorem XI.8]{ReedSimon3}, we can assume that the norms of perturbations are as small as needed, and any $\ell^1(\N)$ perturbation is insignificant when proving existence and completeness of wave operators. Recall that if the Jacobi matrix is generated by the OPUC with parameters $\{\gamma_n\}\in \ell^2(\Z_+)$, then 
%\begin{align*}
%p_n=\frac{1}{\sqrt{2}}(1-\overline{\gamma}_{2n-1})^{-1/2}(s_n+%\bar s_n)
%\end{align*}
we have the direct Geronimus relations \eqref{geronimus}
\[
b_{k+1}=\frac 12 \left( (1-\gamma_{2k-1})(1-\gamma_{2k}^2)(1+\gamma_{2k+1})    \right)^{\frac 12}=\frac 12\left(1+\frac{\gamma_{2k+1}}{2}-\frac{\gamma_{2k-1}}{2}\right)+\ell^1(\N)
\]
and
\[
v_{k+1}=-\frac 12 (\gamma_{2k-2}(1+\gamma_{2k-1})-\gamma_{2k}(1-\gamma_{2k-1}))=\frac 12\left(\gamma_{2k}-\gamma_{2k-2}\right)+\ell^1(\N)\,.
\]
So, if $b_{k+1}=\frac 12(1+\beta_{k+1}-\beta_k)+\ell^1(\N)$ and $v_{k+1}=\frac 12(\alpha_{k+1}-\alpha_{k})$ with $\|\{\beta_k\}\|_{\ell^2}$ and $\|\{\alpha_k\}\|_{\ell^2}$ taken small enough, then we can define
$\gamma_{2n-2}=\alpha_n, \gamma_{2n-1}=2\beta_n$. Then, the Jacobi matrix generated by such a choice will differ from the given $J$ by an $\ell^1(\N)$ perturbation. The application of Theorem~\ref{thm:main} concludes the proof.

\end{proof}

%Finally, we may also prove Theorem~\ref{thm:Cesaro} and establish the existence of wave operators in Ces\`aro sense without the quantitative condition \eqref{loga}. 

%\begin{proof}
%Since the only place this condition appears in the proof of Theorem~\ref{thm:main} is in the application of Lemma~\ref{lem:PCA}, which in turn only depends on \eqref{loga} in the estimation of the integral $I_2$ in  \eqref{sum4}, we describe only this modification. 

%We claim that if $\gamma_n\in \ell^2$, then we have $I_2\to 0$ in the Ces\`aro sense. Indeed, following \eqref{lota}, let $F_j=\sum_{u\neq 0} u^{-1}e^{2\pi iju/\kappa}z^u$. Then, 
%\[
%I_2\le \sum_{j}\|{\rm Proj}_{H_j}F_j\|_{2,\sigma}^2
%\]
%and $H_j=\text{span}\{\phi_{\kappa^2+j\kappa},\ldots,\phi_{\kappa^2+j\kappa+\kappa}\}$. Changing the time from $\kappa^2$ to $\kappa^2+\kappa$ shifts the index of the space $H_j$ to $H_{j+1}$. Since 
%\[
%\sum_{p=0}^\kappa\|{\rm Proj}_{H_{j+p}}F_j\|_{2,\sigma}^2\lesssim 1.
%\]

%\end{proof}
\bigskip

\section{Appendix}
\subsection{Auxiliary proofs and OPUC estimate}\label{app:1}

\begin{lemma}\label{lem:aux} Assume that $\|\{\gamma_n\}\|_2\le \frac 12$. For every arc $I\subset \T$, let $\nu:=|I|^{-1}$. Then, for each $\ell\in \N$, we have
\begin{equation}
\int\limits_I |\phi_m|^2d\sigma\lesssim |I|+O_\ell\left(\sum_{n\ge 0} (n+1)^{-\ell}\sum_{|s-m|< \nu(n+1),s\ge 0} |\gamma_s|^2 \right)+O_\ell\Bigl((m/\nu)^{-\ell}\Bigr),\label{pi1}
\end{equation}
provided that $m\gtrsim  \nu$.
\end{lemma}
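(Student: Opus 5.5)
The plan is to dominate $\mathbb{1}_I$ by a smooth bump adapted to $I$ and to expand that bump in Fourier modes. The zero mode gives the term $|I|$. Each nonzero mode $z^u$ gives a diagonal entry $\langle z^u\phi_m,\phi_m\rangle_\sigma$ of the power $G^u$ of the multiplication matrix from \eqref{form-g}, and the key point is that this entry is \emph{quadratic} in Verblunsky coefficients with indices near $m$.

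\textbf{Step 1 (smoothing).} Fix $w\ge 0$ on $\T$ with $w\ge \mathbb{1}_I$, $\infty$-localized to $I$ (a rescaled copy of the bump $h$ used for $\omega_0$, centered at the midpoint of $I$). Writing $w=\sum_u a_u z^u$, one has
\[
|a_u|\le_\ell \nu^{-1}(1+|u|/\nu)^{-\ell},\qquad a_0\sim |I| .
\]
Then
\[
\int_I|\phi_m|^2d\sigma\le \int w|\phi_m|^2d\sigma=\sum_u a_u\langle z^u\phi_m,\phi_m\rangle_\sigma .
\]
The $u=0$ term is $a_0\lesssim|I|$. Since $|\langle z^u\phi_m,\phi_m\rangle_\sigma|\le1$, the range $|u|\ge m$ contributes at most $\sum_{|u|\ge m}|a_u|\le_\ell (m/\nu)^{-\ell+1}$, which is the last error term after relabelling $\ell$. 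By conjugate symmetry it then suffices to treat $1\le u<m$.

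\textbf{Step 2 (quadratic bound on the diagonal entries).} I aim to show, for $1\le u<m$,
\begin{equation*}
|\langle z^u\phi_m,\phi_m\rangle_\sigma|\lesssim \sum_{|s-m|\le u+1}|\gamma_s|^2 .
\end{equation*}
\begin{enumerate}
\item Iterate \eqref{opuc1} to write $\Phi_m=z^u\Phi_{m-u}-\sum_{s=m-u}^{m-1}\bar\gamma_s z^{m-1-s}\Phi_s^*$.
\item Since $\langle z^u\phi_m,z^u\phi_{m-u}\rangle_\sigma=\langle\phi_m,\phi_{m-u}\rangle_\sigma=0$, this gives
\[
\langle z^u\phi_m,\phi_m\rangle_\sigma=-\nu_m\sum_{s=m-u}^{m-1}\gamma_s\,\langle z^{u}\phi_m, z^{m-1-s}\Phi_s^*\rangle_\sigma .
\]
\item Each pairing equals $\langle\phi_m,z^{m-1-s-u}\Phi_s^*\rangle_\sigma$. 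The polynomial $z^{m-1-s-u}\Phi_s^*$ has degrees from $m-1-s-u<0$ up to $m-1-u<m$. Since $\phi_m\perp_\sigma z^j$ for $0\le j<m$, only its negative-degree part survives in the pairing.
\item That negative-degree part is controlled by $\|{\rm Proj}_H z^{-j}\|_{2,\sigma}$ times the size of the low coefficients of $\Phi_s^*-1$. Using again the recursion $\Phi_s^*=1-\sum_{t<s}\gamma_t z\Phi_t$, I expect the relevant coefficients to be bounded by $\big(\sum_{t\in[m-u-1,m]}|\gamma_t|^2\big)^{1/2}$.
\item Cauchy--Schwarz in $s$ then produces the square of the window's $\ell^2$ norm.
\end{enumerate}
To make this rigorous and avoid contamination from coefficients far from $m$, I would use the same device as in the proof of Theorem~\ref{lem:PCA}. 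By the band structure of $G$ in \eqref{form-g}, the entry $G^{(u)}_{m,m}$ depends only on $\gamma_s$ with $|s-m|\le u+1$. Hence one may replace $\sigma$ by the measure $\widetilde\sigma$ whose coefficients are truncated to that window. For $\widetilde\sigma$, the Szeg\H o-theorem bound $\|{\rm Proj}_{\widetilde H}z^{-j}\|^2\lesssim\|\widetilde\gamma\|_2^2$, already used in the proof of Theorem~\ref{lem:PCA}, gives the needed smallness. This quadratic bound is the step I expect to be the main obstacle. A naive estimate yields only $\sum|\gamma_s|$, which is not summable, or $\|\widetilde\gamma\|_2$ to the first power. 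Extracting two factors of $\gamma$ requires using orthogonality on both sides of the pairing, once via the recursion for $\Phi_m$ and once via that for $\Phi_s^*$.

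\textbf{Step 3 (summation).} Group the modes $1\le u<m$ dyadically-linearly as $n\nu\le u<(n+1)\nu$, $n\ge 0$. On the $n$-th block, $|a_u|\le_\ell \nu^{-1}(n+1)^{-\ell}$, the block has about $\nu$ values of $u$, and each entry is bounded by $\sum_{|s-m|<\nu(n+2)}|\gamma_s|^2$. Summing over blocks gives
\[
O_\ell\Bigl(\sum_{n\ge0}(n+1)^{-\ell}\sum_{|s-m|<\nu(n+1),\,s\ge0}|\gamma_s|^2\Bigr)
\]
after shifting $n$ and relabelling $\ell$. The condition $m\gtrsim\nu$ is used only to ensure that the windows sit at nonnegative indices, up to the cut at $s\ge0$, and that the tail $|u|\ge m$ is genuinely the $(m/\nu)^{-\ell}$ term. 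Combining the three pieces yields \eqref{pi1}.
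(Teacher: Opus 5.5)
Your Steps 1 and 3 are sound. Organizing the proof around the Fourier coefficients of $|\phi_m|^2d\sigma$ is a legitimate alternative to the paper's argument. The paper instead compares $z^m\overline{\phi_m^*}\,\omega$ with the packet $\sum_{j\ge0}\phi_j\widehat\omega_{j-m}$, whose norm is exactly $\sum_j|\widehat\omega_{j-m}|^2\sim|I|$ by orthonormality. It controls the difference using only the first-order bound $\|\phi^*_j-\phi^*_m\|_{2,\sigma}^2\lesssim\sum_s|\gamma_s|^2$ over the window between $j$ and $m$.

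However, everything hinges on Step 2, and the route you sketch does not deliver it. In item 4, the negative-degree part of $z^{m-1-s-u}\Phi^*_s$ contains the term with coefficient $\Phi^*_s(0)=1$. So the smallness must come from $\langle\phi_m,z^{-k}\rangle_\sigma$, not from coefficients of $\Phi^*_s-1$. What truncation plus Szeg\H o's theorem gives is $|\langle\widetilde\phi_m,z^{-k}\rangle_{\widetilde\sigma}|\le\|{\rm Proj}_{\widetilde H}z^{-k}\|_{2,\widetilde\sigma}\lesssim\|\widetilde\gamma\|_2$ for each single $k$. At best this bounds each of the $u$ pairings in item 2 uniformly by $\|\widetilde\gamma\|_2$. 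Cauchy--Schwarz in $s$ then gives $\sqrt u\,\sum_{\rm window}|\gamma_s|^2$, not $\sum_{\rm window}|\gamma_s|^2$, and summing the up to $u$ negative-degree terms inside each pairing loses more. That loss is fatal: in Step 3 it multiplies the $n$-th block by $\sqrt{(n+1)\nu}$. Then \eqref{pi1} would not hold with constants independent of $\nu$, which is exactly what is needed when the lemma is applied with $\nu\sim\kappa\to\infty$ in the proof of Corollary~\ref{cor1}.

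The missing idea is an orthogonality that removes the loss. Take $1\le u\le m$. The polynomial $z^u\phi^*_m$ has degrees in $[u,m+u]$ and $z^u\phi^*_{m-u}$ has degrees in $[u,m]$. So $\phi^*_n\perp_\sigma\{z,\dots,z^n\}$ gives $\langle\phi^*_{m+u},z^u\phi^*_m\rangle_\sigma=\langle\phi^*_m,z^u\phi^*_{m-u}\rangle_\sigma=\langle\phi^*_{m+u},z^u\phi^*_{m-u}\rangle_\sigma=0$, hence
\[
\overline{\langle z^u\phi_m,\phi_m\rangle_\sigma}=\langle\phi^*_m,z^u\phi^*_m\rangle_\sigma=\bigl\langle\phi^*_m-\phi^*_{m+u},\,z^u(\phi^*_m-\phi^*_{m-u})\bigr\rangle_\sigma .
\]
Now apply Cauchy--Schwarz together with $\|\phi^*_{n'}-\phi^*_n\|_{2,\sigma}^2\lesssim\sum_{n\le s<n'}|\gamma_s|^2$. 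That bound follows from $\Phi^*_{s+1}-\Phi^*_s=-z\gamma_s\Phi_s$ and orthogonality of $\{\Phi_s\}$, as in the proof of Lemma~\ref{lem1}. This gives $|\langle z^u\phi_m,\phi_m\rangle_\sigma|\lesssim\sum_{m-u\le s<m+u}|\gamma_s|^2$ directly, with no truncation.

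Equivalently, within your item 2, note that $z^{m-1-s}\Phi^*_s=z^{m-1}\overline{\Phi_s}$ on $\T$. So the $s$-th pairing equals $\overline{\langle\phi^*_m-\phi^*_{m+u},z^{u+1}\Phi_s\rangle_\sigma}$, and Bessel's inequality for the orthonormal family $\{z^{u+1}\phi_s\}$ makes your Cauchy--Schwarz in $s$ lossless.

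With this in place, your Steps 1 and 3 yield \eqref{pi1}. Your route then proves a stronger intermediate fact: a localized quadratic bound on every nonzero Fourier coefficient of $|\phi_m|^2d\sigma$. The paper's packet comparison gets by with only the first-order $L^2$ estimate.
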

\begin{proof} Without loss of generality, we can assume that $I=(-1/(2\nu),1/(2\nu))$. Take $\rho\in C^\infty(\R)$ such that $\supp \,\rho\subset [-1,1]$ and $\rho=1$ on $[-1/2,1/2]$. Let $\omega(x)=\rho(x\nu)$ for $x\in (-1/\nu,1/\nu)$ and let it be zero for other $x\in [-\pi,\pi)$. Hence, we get
\[
|\widehat\omega_u|\le_\ell (\nu(1+|u|/\nu)^{\ell})^{-1}
\]
for every $\ell\in \Z^+$. Since $\{\phi_j\}$ is orthonormal,
\begin{equation}\label{kar1}
\|\sum_{j\ge 0} \phi_j \widehat\omega_{j-m}\|^2_{2,\sigma}=\sum_{j\ge 0} |\widehat\omega_{j-m}|^2\sim |I|+\nu^{-1}O_{\ell}\Bigl((m/\nu)^{-(2\ell-1)}\Bigr)
\end{equation}
if $\ell\ge 1$.
However,
\[
\sum_{j\ge 0} \phi_j \widehat\omega_{j-m}=z^m\sum_{j\ge 0} \overline{\phi_j^*}z^{j-m} \widehat\omega_{j-m}=z^m\sum_{j\ge 0} (\overline{\phi_j^*}-\overline{\phi_m^*})z^{j-m} \widehat\omega_{j-m}+z^m\overline{\phi_m^*}\sum_{j\ge 0} z^{j-m} \widehat\omega_{j-m}=:I_1+I_2\,.
\]
Notice that
\begin{equation}\label{kar2}
\|I_2-z^m\overline{\phi_m^*}\omega\|_{2,\sigma}=O_\ell\Bigl((m/\nu)^{-(\ell-1)}\Bigr)
\end{equation}
if $\ell\ge 2$. 
For $I_1$, the bound is
\[
\|I_1\|_{2,\sigma}\le_\ell\sum_{n\ge 0} \left(\sum_{|s-m|< \nu(n+1),s\ge 0} |\gamma_s|^2\right)^{\frac 12} (n+1)^{-\ell}\,.
\]
By the Cauchy-Schwarz bound, we get
\begin{equation}\label{kar3}
\|I_1\|^2_{2,\sigma}\le_\ell\sum_{n\ge 0} \left(\sum_{|s-m|< \nu(n+1),s\ge 0} |\gamma_s|^2\right) (n+1)^{-2\ell}\,.
\end{equation}
We need to combine \eqref{kar1}, \eqref{kar2}, \eqref{kar3} and apply triangle inequality to finish the proof.
\end{proof}

We continue  by proving the corollary stated in Section~\ref{section2} above. 

\begin{proof}[Proof of Corollary~\ref{cor1}]
Take any $P\in \N$ and represent $f_{j,\kappa}=\sum_{d=0}^{P-1} f_{j,\kappa}^{(d)}$ according to $j\equiv d\mod P$. 
Write $A=\sum_{d=0}^{P-1}A^{(d)}$ and $B=\sum_{d=0}^{P-1}B^{(d)}$, respectively. We claim that
\begin{equation}\label{gap1}
\|A^{(d)}-B^{(d)}\|_{2,\sigma}=o(1)+O(P^{-2})
\end{equation}
and 
\begin{equation}\label{pol-53}
 \sum_{j=0}^{\kappa-1} |f^{(d)}_{j,\kappa}|^2
\|\phi^*_{2(\kappa^2+2j\kappa)}\omega_j\|^2_{2,\sigma} 
-\int\limits_{\mathbb T} \sum_{j=0}^{\kappa-1} |f^{(d)}_{j,\kappa}|^2|\omega_j(z)|^2dm
=o(1)+O(P^{-2})
\end{equation}
as $\kappa\to\infty$. 

 Without loss of generality, take $d=0$. Recall that $\omega_0(z)=h(\kappa x), z=e^{ix}$, see \eqref{base_1}. Our $h$ is smooth and compactly supported on $\R$ so $\widehat \omega_0$ is not compactly supported, and we cannot use Lemma~\ref{lem1} or Lemma~\ref{post1} directly. Instead, we will use an approximation argument. Take an even function $\mu(\xi)\in C^\infty_c(\R)$ such that $\mu=1$ on $[-1,1]$ and $\mu=0$ for $|\xi|>2$ and let $h_m(x)=\mathcal{F}^{-1}((\mathcal{F} h)(\xi)\mu(\xi/m))$
 where $m\in \N$ is a large parameter to be chosen later. Clearly, $(\mathcal F h_m)(\xi)=(\mathcal F h)(\xi)$ for $|\xi|<m$,  $(\mathcal F h_m)(\xi)=0$ for $|\xi|>2m$, and $(\mathcal F h_m(\kappa x))(\xi)=\kappa^{-1}(\mathcal F h_m(x))(\xi/\kappa)$. Next, we let 
 \[
\tau_0(e^{ix})=\sum_{p\in \Z}h_m((x-2\pi p)\kappa), \quad x\in \R\,.
 \]
 This is a smooth function on $\T$. By the Poisson summation formula, $(\widehat\tau_0)_n=(\mathcal{F}h_m(x\kappa))(n)=\kappa^{-1}(\mathcal F h_m(x))(n/\kappa)=\kappa^{-1}(\mathcal F h)(n/\kappa)\mu(n/(\kappa m))$. Hence,\smallskip
 
 (A) we have $(\widehat\tau_0)_n=(\widehat\omega_0)_n$ for $|n|<\kappa m$ and $(\widehat\tau_0)_n=0$ for $|n|>2\kappa m$,\smallskip
 
 (B) the function $\tau_0(z)$ is $\infty$-localized to the arc $I_0$ uniformly in $m$. \smallskip
 
 (C) the bound 
 \begin{equation}\label{com-1}
 |(\widehat\tau_0)_n-(\widehat\omega_0)_n|\le_\ell  m^{-\ell}(\kappa(1+|n/\kappa|^\ell))^{-1} 
 \end{equation}
 holds for any $n\in \Z, \ell\in \Z_+$.\smallskip
 
 Now, we introduce 
 \[
 \tau_j(z):=\tau_0(z/z_j), \quad \delta_j(z):=\omega_j(z)-\tau_j(z)
 \]
 and write
 \[
 \tau_j(z)=:\sum_{u\in \Z} \eta_{j,u}z^u, \quad  \delta_j(z)=:\sum_{u\in \Z} \beta_{j,u}z^u\,.
 \]
 Notice that both $\tau_j$ and $\delta_j$ are $\infty$-localized at $I_j$ and 
 \begin{equation}\label{pes5}
 \|\delta_j\|_{L^\infty(\T)}\stackrel{\eqref{com-1}}{\le_\ell} m^{-\ell}.
 \end{equation}
 Then,
 \[
 A^{(0)}=\sum_{j=0}^{\kappa-1}\sum_{|u|\le\kappa^2}f^{(0)}_{j,\kappa}s_{\kappa^2+2j\kappa+u}\alpha_{j,u}=I_1+I_2\,,
 \]
 where
 \[
 I_1:= \sum_{j=0}^{\kappa-1}\sum_{|u|\le\kappa^2}f^{(0)}_{j,\kappa}s_{\kappa^2+2j\kappa+u}\eta_{j,u}, \,  I_2:=\sum_{j=0}^{\kappa-1}\sum_{|u|\le\kappa^2}f^{(0)}_{j,\kappa}s_{\kappa^2+2j\kappa+u}\beta_{j,u}\,.
 \]
 Similarly, 
 \[
 B^{(0)}=\sum_{j=0}^{\kappa-1}
f^{(0)}_{j,\kappa}s_{\kappa^2+2j\kappa}\sum_{u\in\Z}\alpha_{j,u}z^u=J_1+J_2\,,
 \]
 where
 \[
 J_1:=\sum_{j=0}^{\kappa-1}
f^{(0)}_{j,\kappa}s_{\kappa^2+2j\kappa}\sum_{u\in\Z}\eta_{j,u}z^u, \quad J_2:=\sum_{j=0}^{\kappa-1}
f^{(0)}_{j,\kappa}s_{\kappa^2+2j\kappa}\sum_{u\in\Z}\beta_{j,u}z^u=\sum_{j=0}^{\kappa-1}
f^{(0)}_{j,\kappa}s_{\kappa^2+2j\kappa}\delta_j\,.
 \]
Taking $m=\lfloor P/4\rfloor$, we get $\|I_1-J_1\|_{2,\sigma}\to 0$ as $\kappa\to\infty$ by adjusting the proof of \eqref{step-11}.
 We are going to show that
 \begin{equation}\label{fir3}
 \|I_2\|_{2,\sigma}\le_\ell P^{-\ell}, \quad  \|J_2\|_{2,\sigma}\le_\ell P^{-\ell/4}\,.
 \end{equation}
 Partitioning $\T=\cup_{s=0}^{\kappa-1}I_s$, we may separate diagonal and off-diagonal terms and write
  \begin{align*}
\|J_2\|^2_{2,\sigma}&{\leq}_{\ell} \,\mathcal{D}+\sum_{s=0}^{\kappa-1}\sum_{j_1\ne j_2}|f^{(0)}_{j_1,\kappa}||f^{(0)}_{j_2,\kappa}|\frac{\left(\int\limits_{I_s}|\phi^*_{2(\kappa^2+2j_1\kappa)}|^2d\sigma(z)\right)^{1/2}\left( \int\limits_{I_s}|\phi^*_{2(\kappa^2+2j_2\kappa)}|^2d\sigma(z)\right)^{1/2}}{(|j_1-s|^\ell+1)(|j_2-s|^\ell+1)} \\
&\leq_{\ell}\mathcal{D}+\sum_{s=0}^{\kappa-1}\sum_{j_1\ne j_2}|f^{(0)}_{j_1,\kappa}||f^{(0)}_{j_2,\kappa}|\frac{\left(\int\limits_{I_s}|\phi^*_{2(\kappa^2+2j_1\kappa)}|^2d\sigma(z)\right)^{1/2}\left( \int\limits_{I_s}|\phi^*_{2(\kappa^2+2j_2\kappa)}|^2d\sigma(z)\right)^{1/2}}{|j_1-j_2|^{\ell/2}(|j_1-s|^{\ell/2}+1)(|j_2-s|^{\ell/2}+1)}  \\
&\leq_{\ell} \mathcal{D}+P^{-\ell/2}\sum_{s=0}^{\kappa-1}\sum_{j_1\ne j_2}\frac{ \left(\int\limits_{I_s}|\phi^*_{2(\kappa^2+2j_1\kappa)}|^2d\sigma(z)\right)^{1/2}\left( \int\limits_{I_s}|\phi^*_{2(\kappa^2+2j_2\kappa)}|^2d\sigma(z)\right)^{1/2}}{(|j_1-s|^{\ell/2}+1)(|j_2-s|^{\ell/2}+1)}
 \end{align*}
by $\infty$-localization of $\delta_j$, and where $\mathcal D$ collects the diagonal terms:
 \begin{align*}
 \mathcal D:=\sum_{j=0}^{\kappa-1}\sum_{s=0}^{\kappa-1}|f_{j,\kappa}^{(0)}|^2\int\limits_{I_s}|\phi_{2(\kappa^2+2j\kappa)}^*|^2|\delta_j|^2d\sigma(z).
 \end{align*}
We estimate the off-diagonal terms first.  For these terms, after applying the inequality $2xy\le x^2+y^2$ in the numerator and summing out the $j_{1(2)}$ factor (assuming $\ell\ge 4$), it suffices to show 
\begin{align}\label{last-09}
\sum_{s=0}^{\kappa-1}\sum_{j=0}^{\kappa-1}\frac{1}{|j-s|^{\ell/2}+1} \int\limits_{I_s}|\phi^*_{2(\kappa^2+2j\kappa)}|^2d\sigma(z)=O(1)\,.
\end{align}
We now make use of Lemma~\ref{lem:aux},  set $m_j=2(\kappa^2 +2j\kappa)$ and define
\begin{align*}
\sum_{n\ge 0} (n+1)^{-\ell}\sum_{|r-m_j|< \kappa(n+1)} |\gamma_r|^2=:E_j
\end{align*}
so \eqref{pi1} gives
\begin{align*}
\int\limits_{I_s}|\phi^*_{2(\kappa^2+2j\kappa)}|^2d\sigma(z)\leq E_j+O(1/\kappa)\,.
\end{align*}
Taking  $\ell\ge 4$, we may sum out the $s$-index and find
\begin{align*}
\sum_{s=0}^{\kappa-1}\sum_{j=0}^{\kappa-1}\frac{1}{|j-s|^{\ell/2}+1} \int\limits_{I_s}|\phi^*_{2(\kappa^2+2j\kappa)}|^2d\sigma(z)&\leq_\ell \sum_{j=0}^{\kappa-1}E_j+O(1)\\
&=\sum_{n\ge 0} (n+1)^{-\ell}\sum_{j=0}^{\kappa-1}\sum_{|r-m_j|< \kappa(n+1)} |\gamma_r|^2+O(1)\,.
\end{align*}
We note that 
\begin{align*}
\sum_{j=0}^{\kappa-1}\sum_{|r-m_j|< \kappa(n+1)} |\gamma_r|^2\lesssim  (n+1)\sum_{r\geq 0}|\gamma_r|^2,
\end{align*}
 so that
 \begin{align*}
 \sum_{j=0}^{\kappa-1}E_j\lesssim \sum_{n\geq 0}(n+1)^{-\ell+1},
 \end{align*}
establishing \eqref{last-09}. We estimate $\mathcal{D}$ by separating the terms within a $P$-width of the diagonal as

\begin{eqnarray}\label{pol-56}
 \mathcal{D}\le_{\ell}\sum_{j=0}^{\kappa-1}\sum_{s=0}^{\kappa-1} \int\limits_{I_s}|\phi^*_{2(\kappa^2+2j\kappa)}|^2|\delta_j|^2d\sigma(z)\hspace{7cm}\\
{\le_\ell}
\sum_{|s-j|<P}\int\limits_{I_s}|\phi^*_{2(\kappa^2+2j\kappa)}|^2|\delta_j|^2d\sigma(z)+
\sum_{|s-j|\ge P}\frac{1}{|j-s|^{2\ell}+1} \int\limits_{I_s}|\phi^*_{2(\kappa^2+2j\kappa)}|^2d\sigma(z)\hspace{1cm} \nonumber\\
\stackrel{\eqref{pes5}}{\le_{\ell}}\sum_{|s-j|<P}P^{-2\ell}\frac{|j-s|^{\ell}+1}{|j-s|^{\ell}+1} \int\limits_{I_s}|\phi^*_{2(\kappa^2+2j\kappa)}|^2d\sigma(z)+
\sum_{|s-j|\ge P}\frac{1}{|j-s|^{2\ell}+1} \int\limits_{I_s}|\phi^*_{2(\kappa^2+2j\kappa)}|^2d\sigma(z) \nonumber\\
\le_{\ell}\sum_{|s-j|<P}\frac{P^{-\ell}}{|j-s|^{\ell}+1} \int\limits_{I_s}|\phi^*_{2(\kappa^2+2j\kappa)}|^2d\sigma(z)+
\sum_{|s-j|\ge P}\frac{P^{-\ell}}{|j-s|^{\ell}+1} \int\limits_{I_s}|\phi^*_{2(\kappa^2+2j\kappa)}|^2d\sigma(z)\stackrel{\eqref{last-09}}{\le_\ell}P^{-\ell}\,.\nonumber
\end{eqnarray}
Thus, the second bound in \eqref{fir3} is established.

%
%\begin{align*}
%\mathcal{D}&=\sum_{j=0}^{\kappa-1}|f_{j,\kappa}|^2\int\limits_{I_j}|\phi_{2(\kappa^2+2j\kappa)}^*|^2|\delta_{j}|^2d\sigma(z)+\sum_{1\leq |j-s|\leq P}|f_{j,\kappa}|^2\int\limits_{I_s}|\phi_{2(\kappa^2+2j\kappa)}^*|^2|\delta_{j}|^2d\sigma(z)\\
%&+\sum_{|j-s|>P}|f_{j,\kappa}|^2\int\limits_{I_s}|\phi_{2(\kappa^2+2j\kappa)}^*|^2|\delta_{j}|^2d\sigma(z).
%\end{align*}
%We use \eqref{pes5} on the first and second term, and the $\infty$-localization of $\delta_j$ on the final term to find
%\begin{align*}
%\mathcal{D}&\leq_{\ell}P^{-2\ell}\sum_{j=0}^{\kappa-1|}|f_{j,\kappa}|^2\int\limits_{I_j}|\phi_{2(\kappa^2+2j\kappa)}^*|^2d\sigma(z)+P^{-2\ell}\sum_{1\leq |j-s|\leq P}|f_{j,\kappa}|^2\int\limits_{I_s}|\phi_{2(\kappa^2+2j\kappa)}^*|^2d\sigma(z)\\
%&+P^{-\ell}\sum_{|j-s|>P}|f_{j,\kappa}|^2\frac{1}{1+|j-s|^{2\ell}}\int\limits_{I_s}|\phi_{2(\kappa^2+2j\kappa)}^*|^2d\sigma(z)\\
%&\leq_{\ell}P^{-\ell}\left( \sum_{j=0}^{\kappa-1|}\int\limits_{I_j}|\phi_{2(\kappa^2+2j\kappa)}^*|^2d\sigma(z)+\sum_{1\leq |j-s|\leq P}\int\limits_{I_s}|\phi_{2(\kappa^2+2j\kappa)}^*|^2d\sigma(z)+\sum_{|j-s|>P}\frac{\int_{I_s}|\phi_{2(\kappa^2+2j\kappa)}^*|^2}{1+|j-s|^{\ell}}d\sigma(z)\right)
%\end{align*}
%pulling out a factor of $P^{-\ell}$ from the final term, and using $|f_{j,\kappa}|\lesssim 1$. To see that the three terms in parentheses are $O(1)$, we use \eqref{last-09} for $\ell\geq 2$ for the final term, and \eqref{eq:Ej} for the first two terms. 

Next, we write
\[
I_2=\sum_{j=0}^{\kappa-1}\sum_{|u|\le\kappa^2}f^{(0)}_{j,\kappa}s_{\kappa^2+2j\kappa+u}\beta_{j,u}=\sum_{n=0}^{5\kappa^2}s_nc_n, \quad c_n=\sum_{j,u: \,\kappa^2+2j\kappa+u=n} f_{j,\kappa}^{(0)}\beta_{j,u}
\]
and \eqref{com-1} gives $|c_n|\le_{\ell}P^{-\ell}\kappa^{-1}$. The orthogonality of the system $\{s_n\}$ yields
$
\|I_2\|_{2,\sigma}\le_{\ell} P^{-\ell}
$
and we have \eqref{gap1} for $\ell\ge 8$.
 \medskip
 
To prove \eqref{pol-53}, we argue similarly by substituting $\omega_j=\tau_j+\delta_j$ and using the triangle inequality. First, one can claim
 \begin{equation}\label{pol-54}
 \sum_{j=0}^{\kappa-1} |f^{(0)}_{j,\kappa}|^2
\|\phi^*_{2(\kappa^2+2j\kappa)}\tau_j\|^2_{2,\sigma} 
-\int\limits_{\mathbb T} \sum_{j=0}^{\kappa-1} |f^{(0)}_{j,\kappa}|^2|\tau_j(z)|^2dm=o(1), \quad \kappa\to\infty
\end{equation}
 by adjusting the proof of \eqref{lisa3}. Secondly, after decomposing the torus $\T=\cup_{s=0}^{\kappa-1}I_s$ again, the $\infty$-localization of $\delta_j$ around $I_j$ and \eqref{pes5} give
 \begin{equation}\label{pol-55}
 \int\limits_{\mathbb T} \sum_{j=0}^{\kappa-1} |f^{(0)}_{j,\kappa}|^2|\delta_j(z)|^2dm\stackrel{\eqref{pes5}}{\le_\ell} \kappa^{-1}\sum_{j=0}^{\kappa-1}\sum_{s=0}^{\kappa-1}\min\left(P^{-2\ell}, (|s-j|^{2\ell}+1)^{-1}\right)\le_{\ell}P^{-\ell}\,.
 \end{equation}
 Finally, since $|f_{j,\kappa}|\lesssim 1$, we may argue exactly as in \eqref{pol-56} and find
 \begin{align}\label{eq:remainder}
   \sum_{j=0}^{\kappa-1} |f^{(0)}_{j,\kappa}|^2
\|\phi^*_{2(\kappa^2+2j\kappa)}\delta_j\|^2_{2,\sigma}\le_{\ell}\sum_{j=0}^{\kappa-1}\sum_{s=0}^{\kappa-1} \int\limits_{I_s}|\phi^*_{2(\kappa^2+2j\kappa)}|^2|\delta_j|^2d\sigma(z)\leq_{\ell}P^{-\ell}.
 \end{align}
%
% \begin{eqnarray}\label{pol-56}
%  \sum_{j=0}^{\kappa-1} |f^{(0)}_{j,\kappa}|^2
%\|\phi^*_{2(\kappa^2+2j\kappa)}\delta_j\|^2_{2,\sigma}\le_{\ell}\sum_{j=0}^{\kappa-1}\sum_{s=0}^{\kappa-1} \int\limits_{I_s}|\phi^*_{2(\kappa^2+2j\kappa)}|^2|\delta_j|^2d\sigma(z)\\
%{\le_\ell}
%\sum_{|s-j|<P}\int\limits_{I_s}|\phi^*_{2(\kappa^2+2j\kappa)}|^2|\delta_j|^2d\sigma(z)+
%\sum_{|s-j|\ge P}\frac{1}{|j-s|^{2\ell}+1} \int\limits_{I_s}|\phi^*_{2(\kappa^2+2j\kappa)}|^2d\sigma(z)\nonumber\\
%\stackrel{\eqref{pes5}}{\le_{\ell}}\sum_{|s-j|<P}P^{-2\ell}\frac{|j-s|^{\ell}+1}{|j-s|^{\ell}+1} \int\limits_{I_s}|\phi^*_{2(\kappa^2+2j\kappa)}|^2d\sigma(z)+
%\sum_{|s-j|\ge P}\frac{1}{|j-s|^{2\ell}+1} \int\limits_{I_s}|\phi^*_{2(\kappa^2+2j\kappa)}|^2d\sigma(z)\nonumber\\
%\le_{\ell}\sum_{|s-j|<P}\frac{P^{-\ell}}{|j-s|^{\ell}+1} \int\limits_{I_s}|\phi^*_{2(\kappa^2+2j\kappa)}|^2d\sigma(z)+
%\sum_{|s-j|\ge P}\frac{P^{-\ell}}{|j-s|^{\ell}+1} \int\limits_{I_s}|\phi^*_{2(\kappa^2+2j\kappa)}|^2d\sigma(z)\stackrel{\eqref{last-09}}{\le_\ell}P^{-\ell}\,.\nonumber
%\end{eqnarray}
 Now, \eqref{pol-53} follows from \eqref{pol-54}, \eqref{pol-55}, and \eqref{eq:remainder} after we take large $\ell$ and apply the triangle inequality in the space $\{h_j(x)\}|_{j=0}^{\kappa-1}$ with the norm
 $
 \|h\|_1:=(\int\limits_{\T}\sum_{j=0}^{\kappa-1}|h_j(x)|^2d\sigma)^{\frac 12}
 $ for the first term and with the norm $
\|h\|_2:= (\int\limits_{\T}\sum_{j=0}^{\kappa-1}|h_j(x)|^2dm)^{\frac 12}
 $
 for the second as follows. Take $X:=\{f_{j,\kappa}^{(0)}\phi^*_{2(\kappa^2+2j\kappa)}\tau_j\}|_{j=0}^{\kappa-1}$, $Y:=\{f_{j,\kappa}^{(0)}\phi^*_{2(\kappa^2+2j\kappa)}\delta_j\}|_{j=0}^{\kappa-1}$ and $U:=\{f_{j,\kappa}^{(0)}\tau_j\}|_{j=0}^{\kappa-1}$, $W:=\{f_{j,\kappa}^{(0)}\delta_j\}|_{j=0}^{\kappa-1}$. One has
 \begin{equation}\label{gur3}
 \|X+Y\|_1^2-\|U+W\|_2^2=(\|X+Y\|_1-\|U+W\|_2)(\|X+Y\|_1+\|U+W\|_2)\,.
 \end{equation}
 Then,
 \[
 \|X+Y\|_1\stackrel{\eqref{eq:remainder}}{=}\|X\|_1+O(P^{-\ell/2})
 \]
 by the triangle inequality and, similarly, 
 \[
 \|U+W\|_2\stackrel{\eqref{pol-55}}{=}\|U\|_2+O(P^{-\ell/2})\,.
 \]
 We also have
 \[
 \|X\|_1^2\stackrel{\eqref{pol-54}}{=}\|U\|_2^2+o(1)\,,
 \]
 where $\|U\|_2\lesssim 1$ by the $\infty$-localization of $\{\tau_j\}$. Hence, $\|X\|_1\lesssim 1$ and, substitution into \eqref{gur3} gives the required bound, and we may conclude \eqref{pol-53}.

 Given \eqref{gap1}, we apply the triangle inequality to write
\[
\|A-B\|_{2,\sigma}\le \sum_{d=0}^{P-1} \|A^{(d)}-B^{(d)}\|_{2,\sigma}
\]
so
\[
\limsup_{\kappa\to\infty}\|A-B\|_{2,\sigma}\le \sum_{d=0}^{P-1} \limsup_{\kappa\to\infty}\|A^{(d)}-B^{(d)}\|_{2,\sigma}\le_{\ell}P^{-1}\,.
\]
Sending $P\to\infty$, we get \eqref{AB0}. In a similar way, one shows that \eqref{pol-53} yields \eqref{pol}. \end{proof}

%We record the following claim on sufficiency of proving existence of wave operators on an integer sequence.
%
%\begin{lemma}
%Suppose $\{\gamma_n\}\in \ell^2$ and \eqref{loga} holds. Then
%\[
%Wf=\lim_{n\to\infty}e^{inJ}e^{-inJ_0}f
%\]
%for every $f\in H$ implies 
%\begin{equation}\label{l}
%\lim_{t\to\infty}e^{itJ}e^{-itJ_0}f=Wf
%\end{equation}
%\end{lemma}
%
%\begin{proof}
%Claim. Suppose $\{\delta_n\}\in [0,1)$ and $\delta_n\to\delta^*$. First, we notice that 
%\[
%\lim_{n\to\infty}e^{i(n+\delta_n)J}e^{-i(n+\delta_n)J_0}f
%\]
%exists and is equal to $Wf$. Indeed, this is because $e^{-i\delta_n J_0}f\to e^{-i\delta^*J_0}f$ and $e^{i\delta_n J}g\to e^{i\delta^*J}g$ for all $f,g$. Moreover, $e^{-i\delta^*J}We^{i\delta^* J_0}f=Wf$ as follows from the spectral representation for $Wf$ we obtained in our proof, see \eqref{phasecancel}.
%
%Next, we argue by contradiction. Suppose \eqref{l} fails. That means, there is $t_n=m_n+\delta_n$ such that $m_n\in \mathbb{N}, m_n\to\infty,\delta_n\in [0,1)$ and
%\[
%\|e^{it_nJ}e^{-it_nJ_0}f-Wf\|\ge \epsilon>0
%\]
%for all $n$. We can find subsequence $\{k_n\}$ such that $\delta_{k_n}\to\delta^*$, $m_{k_n}$ is monotonic, and then the previous claim gives a contradiction.
%
%\end{proof}
%
\subsection{The pointwise convergence assumption }\label{app:PCA}

As a motivation for the condition \eqref{loga} above, we introduce the following definition. \smallskip

\noindent {\bf Definition.} Consider a measure $\sigma$ in the Szeg\H o class. We say that the pointwise convergence assumption (PCA) holds if $\{\phi_n^*(z)\}$ converges for a.e. $z\in \T$.\medskip

Recall that, under the Szeg\H o condition, $\phi^*_n\to \Pi$ in $L^2_\sigma(\T)$ where $\Pi=D^{-1}\cdot \mathbb{1}_{E_{{\rm ac},\sigma}}$. Then, the PCA implies  that $\{\phi_n^*(z)\}\to \Pi$ Lebesgue a.e.. By the Menshov-Rademacher theorem (see \cite{Denisov2025}), the condition $\gamma_n\log (n+1)\in \ell^2(\Z^+)$ implies the PCA, but it is not known if the PCA holds for all Szeg\H{o} measures. Clearly, assumption \eqref{loga} is weaker than  $\gamma_n\log (n+1)\in \ell^2(\Z^+)$.\smallskip

More directly related to our methods, we prove that the input to the proof of Theorem~\ref{thm:main} that relies on \eqref{loga} is also implied by the PCA.\medskip

\begin{lemma}The PCA implies \eqref{main}.
\end{lemma}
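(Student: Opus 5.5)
Following the first reduction in the proof of Theorem~\ref{lem:PCA}, I split $\Sigma_\kappa$ into even and odd $j$ and use that $\omega_{j_1}\omega_{j_2}=0$ unless the centers are adjacent. Since $|f_{j,\kappa}|\lesssim 1$, the claim \eqref{main} reduces to showing
\[
\sum_{j=0}^{\kappa'}\int |\phi^*_{\kappa^2+2\kappa j}-\Pi|^2\omega_{2j}^2\,d\sigma\to 0,\qquad \kappa\to\infty,
\]
and similarly for odd $j$. I split $\sigma=\sigma' dm+\sigma_s$ and treat the two pieces separately.

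\textbf{Singular part.} On the support of $\sigma_s$ we have $\Pi=0$. The bound $\int|\phi_n^*-\Pi|^2d\sigma\to0$ already gives $\int|\phi^*_n|^2d\sigma_s\to0$ for each $n$, but here the index varies with $j$. I handle this by domination: each summand is at most $\int|\phi^*_n|^2d\sigma_s$, and $\sum_j\omega_{2j}^2\le 1$, so it is enough to bound $\sup_{\kappa^2\le n\le 2\kappa^2}\int |\phi_n^*|^2 d\sigma_s$. For that I use
\[
\int|\phi_n^*|^2d\sigma_s\le\int|\phi_n^*-\phi^*_N|^2d\sigma+2\int|\phi_N^*|^2d\sigma_s ,
\]
and estimate the difference $\phi_n^*-\phi^*_{N}$ for $n,N\ge \kappa^2$ through the recursion \eqref{opuc1}. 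Concretely, $\|\Phi^*_n-\Phi^*_N\|_{2,\sigma}^2\lesssim\sum_{s\ge \kappa^2}|\gamma_s|^2$ by orthonormality of $\{\phi_s\}$, which is uniform in $n,N\ge\kappa^2$. Hence the singular contribution is $o(1)$ without using the PCA.

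\textbf{Absolutely continuous part.} Here I need
\[
\sum_j\int|\phi^*_{n_j}-\Pi|^2\omega_{2j}^2\,\sigma'\,dm\to0,\qquad n_j=\kappa^2+2\kappa j .
\]
This sum is bounded by $\int F_\kappa\,\sigma' dm$ with $F_\kappa(z):=\sup_{n\ge\kappa^2}|\phi_n^*(z)-\Pi(z)|^2$. Under the PCA, $F_\kappa\to0$ a.e. To conclude by dominated or Vitali convergence, I need uniform integrability of $\{|\phi^*_{n_j}|^2\omega_{2j}^2\sigma'\}$ summed over $j$.

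\textbf{Main obstacle.} The main difficulty is that a maximal function of $\phi^*_n$ is not known to be integrable for a general Szeg\H{o} measure. I avoid it with an Egorov-type argument. Fix $\varepsilon>0$ and choose a set $G$ with $\int_{\T\setminus G}\sigma'dm<\varepsilon$ on which $\phi_n^*\to\Pi$ uniformly and $\Pi$ is bounded. Then $\int_G\sum_j(\cdots)\to0$. On $\T\setminus G$, I bound $\sum_j|\phi^*_{n_j}|^2\omega_{2j}^2$ by comparison with $\Pi$, using \eqref{pol} and its localized form, Lemma~\ref{lem:aux}. Lemma~\ref{lem:aux} states that $\int_I|\phi_m|^2d\sigma\lesssim|I|+o(1)$-type errors, so the mass carried by the $\phi^*_{n_j}$ on an arc is controlled by the arc length. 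I expect that making this small on an arbitrary set $\T\setminus G$, rather than on a single arc, is the hardest step. I would try to write the total mass as $\int\sum_j|\phi^*_{n_j}|^2\omega_{2j}^2d\sigma=\int\sum_j\omega_{2j}^2dm+o(1)$ by \eqref{pol}. The same holds with $\Pi$ in place of $\phi^*_{n_j}$, since $\int|\Pi|^2\sigma'\,dm$ restricted to $\sum_j\omega_{2j}^2$ equals $\int\sum\omega_{2j}^2dm+o(1)$, because $|\Pi|^2\sigma'=1$ on $E_{{\rm ac},\sigma}$. Together with a.e. convergence on $G$ and Fatou's lemma, matching total masses then forces $L^2$ convergence, as in the Brezis--Lieb / Riesz argument. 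Letting $\varepsilon\to0$ finishes the proof.
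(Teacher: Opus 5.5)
\textbf{Gap in the singular part.} The step that fails is your treatment of the singular part. The reduction ``each summand is at most $\int|\phi^*_n|^2d\sigma_s$ and $\sum_j\omega_{2j}^2\le 1$, so it suffices to bound $\sup_{\kappa^2\le n\le 2\kappa^2}\int|\phi_n^*|^2d\sigma_s$'' uses two bounds that cannot be used together.
\begin{itemize}
\item Exploiting $\sum_j\omega_{2j}^2\le 1$ requires moving the supremum inside the integral. That produces $\int\sup_n|\phi^*_n|^2d\sigma_s$, which is the maximal function you yourself say is unavailable.
\item Bounding each summand by $\int|\phi^*_{n_j}|^2d\sigma_s$ instead costs a factor $\kappa'+1\sim\kappa$.
\end{itemize}
The recursion estimate has the same defect. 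The bound $\|\Phi^*_n-\Phi^*_N\|^2_{2,\sigma}\lesssim\sum_{s\ge\kappa^2}|\gamma_s|^2$ holds for each fixed $n$, but it only gives $\sum_j\|(\phi^*_{n_j}-\phi^*_N)\omega_{2j}\|^2_{2,\sigma}\lesssim\kappa\sum_{s\ge\kappa^2}|\gamma_s|^2$, which need not tend to zero. In fact the weighted sum $\sum_j\int|\phi^*_{n_j}-\phi^*_{2\kappa^2}|^2\omega_{2j}^2\,d\sigma$ is, up to the normalization handled in \eqref{observe-1}, the quantity $\|D_\kappa\|_{2,\sigma}^2$ of \eqref{omeg-n}. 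If your argument were valid, \eqref{main} would follow from the Szeg\H{o} condition alone, and neither \eqref{loga} nor the PCA would be needed. So the claim that the singular contribution is $o(1)$ without the PCA is unsupported.

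\textbf{How to repair it.} The fix is the mass-matching idea you reach at the end, which is exactly the paper's argument. It must be run with the full measure $\sigma$ and a good set of small \emph{Lebesgue} measure, and then it disposes of the singular part as a by-product.
\begin{itemize}
\item By Egorov with respect to $m$, take $E_\epsilon$ with $|\T\setminus E_\epsilon|\le\epsilon$ on which $\phi_n^*\to\Pi$ uniformly. Your condition $\int_{\T\setminus G}\sigma'dm<\varepsilon$ is the wrong smallness, because the quantity you must control is $\int_{\T\setminus G}\sum_j\omega_{2j}^2\,dm$.
\item Since $|\Pi|^2\sigma'=1$ a.e., uniform convergence gives $\int_{E_\epsilon}\sum_j|\phi^*_{n_j}|^2\omega_{2j}^2\sigma'dm=\int_{E_\epsilon}\sum_j\omega_{2j}^2dm+o(1)$.
\item Subtracting this from \eqref{pol} (via Remark~\ref{rr}) shows that the absolutely continuous mass on $\T\setminus E_\epsilon$, plus the entire singular mass $\int\sum_j|\phi^*_{n_j}|^2\omega_{2j}^2d\sigma_s$, is at most $\epsilon+o(1)$.
\item Combined with $\int_{\T\setminus E_\epsilon}\sum_j|\Pi|^2\omega_{2j}^2\sigma'dm\le\epsilon$ and uniform convergence on $E_\epsilon$, this yields \eqref{main}.
\end{itemize}
No arc-localized input such as Lemma~\ref{lem:aux} is needed.

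If you prefer your Fatou/Brezis--Lieb phrasing, two points need care. First, the comparison function $\sum_j|\Pi|^2\omega_{2j}^2$ depends on $\kappa$, so you need a dominated variant, with domination by $|\Pi|^2\in L^1(\sigma'dm)$. Second, Fatou supplies only the lower bound on the absolutely continuous mass, while \eqref{pol} for the full $\sigma$ supplies the upper bound. The two together are precisely what force the singular mass to vanish.
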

\begin{proof} 
By Egorov's theorem, we know that for each $\epsilon>0$, there is a set $E_\epsilon\subset \T$ such that $|E_\epsilon^c|\le \epsilon$ and $\phi_n^*-\Pi\to 0$ uniformly for $z\in E_\epsilon$. 
In estimating $\left\|\Sigma_{\kappa}-\Pi\sum_{j=0}^{\kappa-1} f_{j,\kappa}\omega_j\right\|_{2,\sigma}$, we use $\omega_{j_1}\omega_{j_2}=0$ for $\dist_{\T}(z_{j_1},z_{j_2})>2\pi/\kappa$ to get
\[
\left\|\Sigma_{\kappa}-\Pi\sum_{j=0}^{\kappa-1} f_{j,\kappa}\omega_j\right\|_{2,\sigma}^2\lesssim \int\limits_{\mathbb T} \sum_{j=0}^{\kappa-1} |\phi^*_{\kappa^2+j\kappa}-\Pi|^2|f_{j,\kappa}|^2\omega_j^2d\sigma\,.
\]
From \eqref{pol} and Remark \ref{rr}, we have
\begin{eqnarray*}
\int\limits_{E_\epsilon^c} \sum_{j=0}^{\kappa-1} |f_{j,\kappa}|^2 |\phi_{\kappa^2+j\kappa}(z)|^2 \omega_j(z)^2\sigma'dm+
\int\limits_{E_\epsilon} \sum_{j=0}^{\kappa-1} |f_{j,\kappa}|^2 |\phi_{\kappa^2+j\kappa}(z)|^2 \omega_j(z)^2\sigma'dm \\
+\int\limits_{\mathbb T}\sum_{j=0}^{\kappa-1}  |f_{j,\kappa}|^2|\phi_{\kappa^2+j\kappa}(z)|^2 \omega_j(z)^2d\sigma_s
-\int\limits_{\mathbb T} \sum_{j=0}^{\kappa-1} |f_{j,\kappa}|^2\omega_j(z)^2dm=o(1), \quad \kappa\to\infty\,.
\end{eqnarray*}

Since 
$|\Pi|^2\sigma'dm=dm$, we have, by uniform convergence 
\begin{align*}
\lim_{\kappa\to\infty}\int\limits_{E_{\epsilon}}\sum_{j=0}^{\kappa-1}|f_{j,\kappa}|^2|\phi_{\kappa^2+j\kappa}|^2\omega_j(z)^2\sigma'(z)dm(z)
-\int\limits_{E_{\epsilon}}\sum_{j=0}^{\kappa-1}|f_{j,\kappa}|^2\omega_j(z)^2dm(z)=0\,.
\end{align*}
One then has
\begin{align}\nonumber
\limsup_{\kappa\to\infty}&\left(\int\limits_{E_\epsilon^c} \sum_{j=0}^{\kappa-1} |f_{j,\kappa}|^2  |\phi_{\kappa^2+j\kappa}(z)|^2 \omega_j(z)^2\sigma'dm
+\int\limits_{\mathbb T}\sum_{j=0}^{\kappa-1} |f_{j,\kappa}|^2 |\phi_{\kappa^2+j\kappa}(z)|^2 \omega_j(z)^2d\sigma_s
\right)\\
&\le \limsup_{\kappa\to\infty}\int\limits_{E_\epsilon^c}\sum_{j=0}^{\kappa-1}|f_{j,k}|^2\omega_j(z)^2dm(z)
\le\epsilon\,,\label{loi1}
\end{align}
because $|f_{j,\kappa}|\lesssim  1$ and $\sum_{j=0}^{\kappa-1}\omega_j=1$. Since $\epsilon$ was arbitrary, we get
\begin{equation}\label{po2}
\lim_{\kappa\to\infty}\int\limits_{\mathbb T}\sum_{j=0}^{\kappa-1} |f_{j,\kappa}|^2 |\phi_{\kappa^2+j\kappa}(z)|^2 \omega_j(z)^2d\sigma_s=0\,.
\end{equation}
We continue by writing
\begin{eqnarray*}
\limsup_{\kappa\to\infty}\int\limits_{\mathbb T} \sum_{j=0}^{\kappa-1} |\phi^*_{\kappa^2+j\kappa}-\Pi|^2|f_{j,\kappa}|^2\omega_j^2\sigma'dm\le \\\limsup_{\kappa\to\infty}\int\limits_{E_\epsilon} \sum_{j=0}^{\kappa-1} |\phi^*_{\kappa^2+j\kappa}-\Pi|^2|f_{j,\kappa}|^2\omega_j^2\sigma'dm+\limsup_{\kappa\to\infty}\int\limits_{E^c_{\epsilon}} \sum_{j=0}^{\kappa-1} |\phi^*_{\kappa^2+j\kappa}-\Pi|^2|f_{j,\kappa}|^2\omega_j^2\sigma'dm\,.
\end{eqnarray*}
The first term is zero by the uniform convergence. For the second one, we have
\begin{eqnarray*}
\limsup_{\kappa\to\infty}\int\limits_{E^c_\epsilon} \sum_{j=0}^{\kappa-1} |\phi^*_{\kappa^2+j\kappa}-\Pi|^2|f_{j,\kappa}|^2\omega_j^2\sigma'dm\le\\ 2\limsup_{\kappa\to\infty}\int\limits_{E^c_\epsilon} \sum_{j=0}^{\kappa-1} (|\phi^*_{\kappa^2+j\kappa}|^2+|\Pi|^2)|f_{j,\kappa}|^2\omega_j^2\sigma'dm\stackrel{\eqref{loi1}}{\lesssim} \epsilon\,.
\end{eqnarray*}
Since $\epsilon$ is arbitrary, we get our result.
\end{proof}
\begin{remark}Following our Remark \ref{rr}, the previous lemma shows that the PCA implies existence and completeness of $\Omega_{\pm}$ as defined in \eqref{eq:WO}, since the assumption \eqref{loga} only enters the proof of Theorem~\ref{thm:main} through Theorem~\ref{lem:PCA}. In fact, we note that the convergence result \eqref{main} may be viewed as the key input beyond the Szeg\H o condition we need to prove Theorem~\ref{thm:main}.
\end{remark}

\end{document}